\def\a{\alpha}
\def\b{\beta}
\def\g{\gamma}
\def\d{\delta}
\def\e{\epsilon}
\def\l{\lambda}
\def\s{\sigma}
\def\S{\Sigma}
\def\ie{\textit{i.e., }}
\def\cf{\textit{cf. }}
\def\NN{\mathbb N}
\def\RR{\mathbb R}
\def\ldeux{\mathbb{L}_2}
\def\lp{\mathbb{L}_p}
\def\card{\textbf{Card}}
\def\fcar{\mathds{1}}
\def\pinf{{+\infty}}
\def\suchthat{\,|\,}
\renewcommand\Re{\operatorname{Re}}
\renewcommand\Im{\operatorname{Im}}
\def\ii{\textrm{i}}
\def\esp{\mathbf E}
\def\var{\mathbf Var}
\def\prob{\mathbf P}
\def\calN{\mathcal N}
\def\simiid{\overset{iid}{\sim}}
\def\nzeroun{\mathcal{N}(0,1)}
\theoremstyle{plain}
\newtheorem{theorem}{Theorem}
\newtheorem{lemma}{Lemma}
\newtheorem{proposition}{Proposition}
\newtheorem*{theorem*}{Theorem}
\newtheorem*{lemma*}{Lemma}
\newtheorem*{proposition*}{Proposition}
\newtheorem*{corollary*}{Corollary}
\theoremstyle{remark}
\newtheorem{remark}{Remark}
\newtheorem*{remark*}{Remark}
\newtheorem*{note*}{Note}
\theoremstyle{definition}
\newtheorem*{definition*}{Definition}
\def\Ydiese{Y^{\texttt{\#}}}
\def\cdiese{c^\texttt{\#}}
\def\xidiese{\xi^{\texttt{\#}}}
\def\FsL{\mathcal{F}_{s,L}}
\def\bu{\boldsymbol{u}}
\def\bc{\boldsymbol{c}}
\def\bu{\boldsymbol{u}}
\def\bcdiese{\boldsymbol{c}^\texttt{\#}}
\def\hzero{H_0}
\def\hun{H_1}
\def\rhosigma{\rho_\sigma}
\def\thetazero{\Theta_0}
\def\thetaun{\Theta_1}
\def\pccdiese{\prob_{\boldsymbol{c},\boldsymbol{c}^\texttt{\#}}}
\def\Eccdiese{\esp_{\boldsymbol{c},\boldsymbol{c}^\texttt{\#}}}
\def\bY{\boldsymbol{Y}}
\def\bYdiese{\boldsymbol{Y}^{\texttt{\#}}}
\def\bepsilon{\boldsymbol{\epsilon}}
\def\somNsigma{\sum_{j=1}^{N_\s}}
\def\fdiese{f^\texttt{\#}}
\def\bxi{\boldsymbol{\xi}}
\def\btildexi{\tilde{\boldsymbol{\xi}}}
\def\sdiese{\sigma^\texttt{\#}}
\begin{document}

\begin{frontmatter}
\title{Minimax hypothesis testing for curve registration}
\runtitle{Minimax hypothesis testing for curve registration}

\begin{aug}
\author{\fnms{Olivier} \snm{Collier}\ead[label=e1]{olivier.collier@imagine.enpc.fr}}
\runauthor{O. Collier}
\address{IMAGINE, LIGM, Université Paris Est, Ecole des Ponts ParisTech, France and ENSAE, CREST}
\end{aug}

\begin{abstract}
This paper is concerned with the problem of goodness-of-fit for curve
registration, and more precisely for the shifted curve model, whose
application field reaches from computer vision and road traffic
prediction to medicine. We give bounds for the asymptotic minimax
separation rate, when the functions in the alternative lie in Sobolev
balls and the separation from the null hypothesis is measured by the
$l_2$-norm. We use the generalized likelihood ratio to build a
nonadaptive procedure depending on a tuning parameter, which we choose
in an optimal way according to the smoothness of the ambient space.
Then, a Bonferroni procedure is applied to give an adaptive test over a
range of Sobolev balls. Both achieve the asymptotic minimax separation
rates, up to possible logarithmic factors.
\end{abstract}

\begin{keyword}
\kwd{Adaptive testing}
\kwd{composite null hypothesis}
\kwd{generalized maximum likelihood}
\kwd{minimax hypothesis testing}
\end{keyword}

\received{\smonth{1} \syear{2012}}

\end{frontmatter}

\section*{Introduction}\label{section:introduction}

\subsection*{Curve registration}

Our concern is the statistical problem of curve registration, which
appears naturally in a large number of applications, when the available
data consist of a set of noisy, distorted signals that possess a common
structure or pattern. This pattern constitutes the essential
information that we want to dig out from the observations. However, the
deformations of the signals are generally nonlinear and relatively
complex, which complicates the statistical task. Fortunately it is
relevant in some cases to assume that the signals only differ from each
other by a horizontal shift: we call this modeling the shifted curve
model. For instance, it was successfully adopted for the interpretation
of the ElectroCardioGramms: each deflection is considered as a
repetition of the same signal starting at a random time.
\citet{TriganoIsserlesRitov2011} proposed an estimator of the common
pattern. Interestingly, the assumptions on the deformations are in
practice violated due to the baseline wandering, a periodic vertical
perturbation of the potential, but the estimation of the structural
pattern performs well yet.

By contrast, SIFT descriptors (\cf\citet*{Lowe2004}) in computer vision
are an example where the specification of the deformations is
essential: selected keypoints of an image are assigned with descriptors
including a histogram of the local gradient. If the image is rotated,
the histogram of each keypoint is simply shifted by the angle of the
rotation. To match the keypoints of the two images, it is then
sufficient to test the adequation of their histograms with the shifted
curve model. So, testing the model is sometimes the main concern, and
even when estimation matters, the adequation of the model may have to
be tested, as the estimation techniques depend on the structure of the
deformations.

We refer to the papers \citet*{BigotGadat2010},
\citet*{BigotGadatLoubes2009}, \citet*{BigotGamboaVimond2009},
\citet*{CastilloLoubes2009}, \citet*{DalalyanGolubevTsybakov2006},
\citet*{Dalalyan2007} and \citet*{GamboaLoubesMaza2007} for results on
the estimation of different features of the curve registration model.
The present work builds on \citet*{CollierDalalyan2012}, where a
comprehensive overview can be found.

\subsection*{Shifted curve model}

This paper deals with the shifted curve model, which we will state in a
Gaussian sequence form, but which originally relates on two
$2\pi$-periodic functions $f$ and $\fdiese$ in $\ldeux$. Expanding
these functions in the complex Fourier basis, we get
\begin{equation*}
f(t) = \sum_{j=-\infty}^\pinf c_j(f) e^{\ii j t} \text{ and } \fdiese(t) = \sum_{j=-\infty}^\pinf c_j(\fdiese) e^{\ii j t} \text{ for $t\in[0,2\pi]$,}
\end{equation*}
where $c_j(f) = \frac{1}{2\pi} \int_0^{2\pi} f(t) e^{-\ii j t} \,dt$
and $c_j(\fdiese) = \frac{1}{2\pi} \int_0^{2\pi} \fdiese(t) e^{-\ii j
t} \,dt$.

With this notation, if $f$ and $\fdiese$ only differ from each other by
a shift, then the Fourier coefficients verify $c_j(\fdiese) = e^{\ii j
\tau} c_j(f)$, for some real $\tau$ in $[0,2\pi]$ and all non-zero
integers $j$. Hence, if we introduce the pseudo-distance $d$ such that
\begin{equation}\label{distance}
d^2((c_1,\ldots),(\cdiese_1,\ldots)) \triangleq \inf_\tau \sum_{j=1}^\pinf |c_j - e^{-\ii j\tau}\cdiese_j|^2,
\end{equation}
and given that $c_j(f) = \overline{c_{-j}(f)}$ for every integer $j$,
testing that $f$ was shifted from $\fdiese$ amounts to testing if
$d(\bc,\bcdiese)=0$ where $\bc=(c_1(f),c_2(f),\ldots)$ and
$\bcdiese=(c_1(\fdiese),c_2(\fdiese),\ldots)$.

Now, if we assume that the observations are given by the white noise model
\begin{equation*}
dY(t) = f(t)\,dt + \s\,dW(t) \text{ and } d\Ydiese(t) = \fdiese(t)\,dt + \s\,dW^\texttt{\#}(t),
\end{equation*}
where $\s>0$ and $W,W^\texttt{\#}$ are independent Wiener processes, we
can state our model in a more convenient Gaussian sequence form:
\begin{flushleft}
\begin{equation}\label{model}
\begin{cases}
\ Y_j = c_j + \s\xi_j \\
\ \Ydiese_j = \cdiese_j + \s\xidiese_j
\end{cases}\!\!\!\!,\qquad j=1,2,\ldots ,
\end{equation}
\end{flushleft}
where
\begin{itemize}[align=left, leftmargin=*, noitemsep]
\item $\{\xi_j,\xidiese_j;j=1,2,\ldots\}$ is a family of independent complex random variables, whose real and imaginary parts are independent standard Gaussian variables,
\item $\s$ is assumed to be known.
\end{itemize}
\eject

\noindent
Our problem amounts to testing $\hzero$ against $\hun$ with
\begin{equation}\label{hypotheses}
\begin{cases}
\ \hzero: \; d(\bc,\bcdiese) = 0, \\
\ \hun: \; d(\bc,\bcdiese) \geq C \rhosigma,
\end{cases}
\end{equation}
where $C$ is a positive constant and $\rhosigma$ is a sequence of
positive real numbers. For reasons that we shall explain later, we
assume that $\bc$ and $\bcdiese$ belong under the alternative to a
Sobolev ball
\begin{equation}\label{FsL}
\FsL\triangleq\Big\{\bu=(u_1,u_2,\ldots):\|\bu^{(s)}\|_2^2\triangleq\sum_{j=1}^{\infty} j^{2s} |u_j|^2 \leq L^2\Big\},
\end{equation}
with $s>0$. With this notation, we denote $\thetazero$ and $\thetaun$
the parameter sets corresponding to the hypotheses $\hzero$ and $\hun$,
$\bY$ and $\bYdiese$ the sequences $(Y_1,Y_2,\ldots)$ and
$(\Ydiese_1,\Ydiese_2,\ldots)$, and we call $\pccdiese$ the probability
engendered by $(\bY,\bYdiese)$ when the parameters are $\bc$ and
$\bcdiese$.

A detailed discussion of the model is deferred to
Section~\ref{section:discussion}, but before that, we point out that
our choice of the Gaussian sequence model is not restrictive, since
this model is equivalent in Le Cam's sense to many other models,
including Gaussian white noise, density estimation (\cf
\citet*{Nussbaum1996}), nonparametric regression (\cf
\citet*{BrownLow1996}, in the case of random design in
\citet*{Reiss2008}, in the case of nonGaussian noise in
\citet*{GramaNussbaum1998} and \citet*{GramaNussbaum2002}), ergodic
diffusion (\cf \citet*{DalalyanReiss2006}). On the other hand, the
Gaussian noise is accepted in computer vision as a good approximation
of the Poisson noise, that is more natural in this context.

\subsection*{Minimax testing}

A randomized test in our model is a random variable taking values in
$[0,1]$ and measurable with respect to the $\s$-algebra engendered by
$(\bY,\bYdiese)$. In practice, the user simulates an independent random
variable with a Bernoulli distribution of parameter the value of the
test, which was computed from the data $(\bY,\bYdiese)$. The null
hypothesis is accepted, respectively rejected, when the result of the
simulation is $0$ or $1$. We say that a test is nonrandomized when it
only takes the values $0$ or $1$.

To measure the performance of a test $\psi$, we choose the minimax
point of view, in which the errors of first and second kind are defined
by
\begin{equation}
\begin{cases}
\ \a(\psi,\thetazero) = \sup_{\thetazero} \Eccdiese\big(\psi\big), \\
\ \b(\psi,\thetaun) = \sup_{\thetaun} \Eccdiese\big(1-\psi\big).
\end{cases}
\end{equation}
Note that in the nonrandomized case, $\a(\psi,\thetazero) = \sup_{\thetazero} \pccdiese(\psi=1)$
and $\b(\psi,\thetaun) = {\sup_{\thetaun} \pccdiese(\psi=0)}$.

We say that consistent testing in the asymptotic minimax sense is
possible if for all $\a,\b > 0,$ there exists a test $\psi_\s$ such
that
\begin{equation}
\begin{cases}
\ \varlimsup\limits_{\s\to0} \a(\psi_\s,\thetazero) \leq \a, \\
\ \varlimsup\limits_{\s\to0} \b(\psi_\s,\thetaun) \leq \b.
\end{cases}
\end{equation}
The distance between the null and the alternative hypotheses,
$C\rhosigma$, determines the existence of such tests. Indeed, if
$C\rhosigma$ is too small, no testing procedure is asymptotically
better than a blind guess, for which
$\a(\psi,\thetazero)+\b(\psi,\thetaun) = 1$. For a fixed pair $\a,\b$,
we call $\rhosigma^*$ the asymptotic minimax separation rate if there
are two positive constants $C_*$ and $C^*$ such that consistent testing
is impossible for $\rhosigma=\rhosigma^*$ and $C<C_*$, and possible for
$\rhosigma=\rhosigma^*$ and $C>C^*$. The best constants $C_*$ and $C^*$
satisfying these conditions are called exact separation constants.
Conventionally, one applies the informal minimal writing length rule to
avoid nonuniqueness of the minimax separation rate and of these
constants. Moreover, a test which is consistent when
$\rhosigma=\rhosigma^*$ and for some $C>0$ is called asymptotically
minimax rate optimal.

There is a vast literature on the subject of minimax testing: minimax
separation rates were investigated in many models, including the
Gaussian white noise model, the regression model, the Gaussian sequence
model and the probability density model, for the greater part in signal
detection, \ie testing the hypothesis ``$f\equiv0$'' against the
alternative ``$\|f\|\geq C\rhosigma$''. We present a selective overview
of the papers that are the most relevant in the context of this work.

Starting from \citet*{Ingster1982}, \citet*{Ermakov1990} and
\citet*{Ermakov1996}, where the minimax separation rate and the exact
separation constants were obtained when the functions in the
alternative lie in ellipsoids and the separation from $0$ is measured
by the $l_2$-norm, various cases were considered: $l_p$-bodies as well
as Sobolev, Hölder and Besov classes. We refer to
\citet*{IngsterSuslina2003} and \citet*{Ingster1993} for a survey. The
cases when the functions in the alternative set lie in Sobolev or
Hölder classes and the separation from $0$ is measured by the sup-norm
or by their values at a fixed point were studied in
\citet*{LepskiTsybakov2000}. Finally, the case of the $\lp$-norm with
$p<2$ in Besov classes was considered in \citet*{LepskiSpokoiny1999}.

Now, all the previously cited results are asymptotic, in the sense that
the noise level $\s$ (in the white noise model) tends to $0$. But from
a practical point of view, it may be interesting to look at the problem
from a nonasymptotic point of view. In the regression and Gaussian
sequence models, \citet*{Baraud2002} derived nonasymptotic minimax
separation rates when the functions in the alternative lie in
$l_p$-bodies (${0<p\leq2}$) and the separation from $0$ is measured by
the $l_2$-norm. \citet*{BaraudHuetLaurent2003,BaraudHuetLaurent2005}
proposed procedures for testing linear or convex hypotheses in the
regression model, and \citet*{FromontLevyLeduc2006} inspected the
improvement implied by a further hypothesis on the periodicity of the
signal in the periodic Sobolev balls.

\subsection*{Composite null hypothesis testing}

Up to here, we have reviewed results dealing mainly with a simple null
hypothesis, namely in the case of signal detection: ``$f\equiv0$''. In
contrast, the testing problem in the shifted curve model deals with a
composite null hypothesis. Here, we give a brief overview of the papers
presenting hypothesis testing problems with composite null hypotheses.

The series of papers \citet*{Baraud2002},
\citet*{BaraudHuetLaurent2003,BaraudHuetLaurent2005} tackled the case
of a nonparametric null hypothesis, but their assumptions are not
applicable in our set-up, since our null hypothesis as defined in
(\ref{hypotheses}) is neither linear nor convex. On the other hand, the
test of a parametric model against a nonparametric one was studied in a
substantial number of papers (\cf\citet*{HorowitzSpokoiny2001} and
references therein), but only in \citet*{HorowitzSpokoiny2001} from a
minimax point of view. The minimax separation rate that they obtained
is the same as with a simple null hypothesis. This is due to the strong
assumptions made on the behaviour of the estimator of the parameter
characterizing the model under $\hzero$.

On a related note, \citet*{GayraudPouet2001,GayraudPouet2005} treated a
more general composite null hypothesis in the regression model, that is
mainly characterized by its entropy. In fact, the set of functions in
the null hypothesis can grow with the sample size, and so be
nonparametric. Their rate is the same as in the case of a simple
hypothesis. Finally, \citet*{ButuceaTribouley2006} also considers the
case of a nonparametric null hypothesis, since $\hzero$ is ``$f=g$'',
where $f$ and $g$ are two density functions.

\subsection*{Adaptive testing}

A limitation of the minimax approach is that the optimal tests depend
on the smoothness class. This is not convenient from a practical point
of view, because the choice of the smoothness seems to be unnatural and
arbitrary. To obtain handier procedures, we need an adaptive definition
for hypothesis testing.

Prior to testing, some sets of smoothness parameters $s,L$ must be
chosen, over which adaptation is performed. Typically, these sets are
taken as compact intervals $[s_1,s_2]$, $[L_1,L_2]$. To each couple of
smoothness parameters $(s,L)$, we associate the smoothness set
$\mathcal{F}_{s,L}$, and we write $\thetazero^{s,L}$ and
$\thetaun^{s,L}$ the corresponding null and alternative hypotheses.
Note that, in our problem, $\thetazero^{s,L}\equiv\thetazero$ is
independent of the smoothness parameters, and that $\thetaun^{s,L}$
depends on $(s,L)$, not only because $\bc$ and $\bcdiese$ are in
$\FsL$, but also since $\rhosigma$ is allowed to be a function of $s$:
as a matter of fact, $\thetaun^{s,L}$ depends on the choice of the
radius $C\rhosigma(s)$. The easiest way to achieve adaptation is to use
the test corresponding to the most constraining smoothness $(s_1,L_2)$,
but this entails a significant loss of efficiency if the tested
parameters are in fact smoother.

Thus, we prefer a more economical approach and we will say that
consistent adaptive testing is possible uniformly over $s\in[s_1,s_2]$
and $L\in[L_1,L_2]$, if for all $\a,\b>0$, there is a test $\psi_\s$
depending only on $s_1,s_2,L_1,L_2,\a$ and $\b$ such that
\begin{equation}
\begin{cases}
\ \varlimsup\limits_{\s\to0} \a(\psi_\s,\thetazero) \leq \a, \\
\ \varlimsup\limits_{\s\to0} \sup\limits_{s,L} \b(\psi_\s,\thetaun^{s,L}) \leq \b.
\end{cases}
\end{equation}
However, adaptive testing is not always possible without loss of
efficiency, \ie taking $\rhosigma(s)=\rhosigma^*(s)$ for each $s$. That
is why it was suggested in \citet*{Spokoiny1996} to replace $\s$ by $\s
d_\s$ in the expression of $\rhosigma^*(s)$, where $d_\s$ is a sequence
of positive real numbers, which can be seen as a necessary payment
regarding the intensity of the noise to achieve adaptivity.

Now, we say that $\rho_{\s d_\s}^*(s), s\in[s_1,s_2]$ is the adaptive
asymptotic minimax separation rate if there are two positive constants
$C_*$ and $C^*$ such that adaptive consistent testing is impossible for
$\rhosigma(s)=\rho_{\s d_\s}^*(s)$ and $C<C_*$, and possible for
$\rhosigma(s)=\rho_{\s d_\s}^*(s)$ and $C>C^*$.

\citet*{Spokoiny1996} proved that the optimal asymptotic factor is
$(\log\log\s^{-1})^{1/4}$, for signal detection in Besov balls.
\citet*{GayraudPouet2005} extended this result for Hölder classes in
the regression model.

\citet*{FanZhangZhang2001} provided a generic tool to construct minimax
and adaptive minimax tests: the generalized maximum likelihood, that we
also use in the present work to build our procedures both in the
nonadaptive and adaptive contexts.

\subsection*{Our contribution}

The problem considered in the present work is qualitatively different
from the aforementioned works on the minimax separation rate, since our
null hypothesis is not only composite but also semiparametric.
Furthermore, it seems that the finite-dimensional parameter cannot be
uniformly consistently estimated, which contrasts with the situation of
\citet*{HorowitzSpokoiny2001}.

Nevertheless, we propose a testing procedure which is consistent when
the separation rate is of order $(\s^2\sqrt{\log\s^{-1}})^{2s/4s+1}$.
This rate is then proven to be minimax, up to a possible logarithmic
factor. Indeed, no testing procedure is consistent for a separation
rate smaller than $\s^{4s/4s+1}$, which is the rate of signal detection
in the Gaussian sequence model when the signal to be detected belongs
to a Sobolev ball and the separation from $0$ is measured by the
$l_2$-norm.

Further, an adaptive test is proposed to circumvent the limitations of
the nonadaptive approach. This test is minimax rate optimal, up to a
possible logarithmic factor, uniformly over a family of Sobolev balls.

Finally, there is a gap between our lower and upper bounds for the
asymptotic minimax separation rate. It could be argued that the lower
bound is suboptimal, and that the minimax separation rate for the
shifted curve model does contain our logarithmic factor. Indeed, the
problem of testing the goodness-of-fit of the shifted curve model can
be regarded as an adaptation to the unknown shift parameter. As a
matter of fact, if adaptation to the unknown smoothness typically
entails a loglog-factor, other types of adaptation can bring simple
logarithmic ones: it is proved in \citet*{LepskiTsybakov2000} that the
asymptotic minimax separation rate for signal detection when the signal
to be detected belongs to a Sobolev or Hölder ball and the separation
from $0$ is measured by the sup-norm is
$(\s^2\sqrt{\log\s^{-1}})^{s/2s+1}$, while it is $\s^{2s/2s+1}$ when
the separation from 0 is measured by the value of the signal at a fixed
point.  The logarithmic factor can be interpreted as a payment for the
adaptation of the problem of testing at one point when this point is
unknown. Furthermore, note that the same logarithmic factor appears in
\citet*{FromontLevyLeduc2006}, where upper bounds on the minimax
separation rate are established in the problem of periodic signal
detection with unknown period.

\subsection*{Organization of the paper}

The rest of this paper is organized as follows: a nonadaptive procedure
is proposed in Section~\ref{section:nonadaptiveupperbound}, and
adjusted in Section~\ref{section:adaptiveupperbound} to obtain an
adaptive test. We also state their minimax performances, which
Section~\ref{section:lowerbound} indicates to be at least nearly
optimal in the minimax sense. The theorems are proved in
Sections~\ref{section:proofupperbound}
to~\ref{section:prooflowerbound}, and the lemmas used in their proofs
are presented in Section~\ref{section:lemmas}. The model is discussed
in Section~\ref{section:discussion}.

\section{Nonadaptive testing procedure}\label{section:nonadaptiveupperbound}

Here, we build a test which will be proven later to be minimax, up to a
possible logarithmic factor. Indeed, the procedure achieves the rate
$(\s^2\sqrt{\log\s^{-1}})^{2s/4s+1}$.

Our proposal, which carries on the work presented in
\citet*{CollierDalalyan2012}, is based on standardized versions
$\l_{\s}(N)$ of estimators of $d(\bc,\bcdiese)$:
\begin{equation}\label{definitionofthetest}
\begin{cases}
\ \l_{\s}(N) = \quad \frac{1}{4\s^2\sqrt{N}} \min_\tau
\Big[\sum_{j=1}^{N} \big| Y_j - e^{-\ii j\tau} \Ydiese_j \big|^2 \Big] - \sqrt{N}, \phantom{\Big()} \\
\ \psi_\s(N,q) = \quad \fcar_{\{\l_\s(N) > q \}}, \phantom{\Big()}
\end{cases}
\end{equation}
for $N\in\NN^*$ and $q\in\RR$. Put into words, the test $\psi_\s(N,q)$
rejects the null hypothesis when the statistic $\l_\s(N)$ exceeds the
threshold $q$ and accepts it otherwise. The following theorem
establishes the minimax properties of this testing procedure for a
proper choice of the tuning parameters.

\begin{theorem}\label{theorem:upperbound}
Set
\begin{equation}
\begin{cases}
\ \thetazero = \Big\{ (\bc,\bcdiese)\in l_2\times l_2 \,|\, d(\bc,\bcdiese) = 0 \Big\},
\\[6pt]
\ \thetaun = \Big\{ (\bc,\bcdiese)\in \FsL\times\FsL \,|\, d(\bc,\bcdiese) \geq C\rhosigma \Big\},
\end{cases}
\end{equation}
with $s$ and $L$ are positive real numbers,
$\rhosigma=\big(\s^2\sqrt{\log\s^{-1}}\big)^\frac{2s}{4s+1}$ and $C^2>
4L^2c_{s,L}^{-2s} + \sqrt{\frac{256\,c_{s,L}}{4s+1}}$, $c_{s,L} =
(4sL^2\sqrt{4s+1})^{2/4s+1}$. Denote $\psi_\s$ the test $\psi_\s (N,q)$
defined in~(\ref{definitionofthetest}) with $N = N_\s(s,L) =
[c_{s,L}\,\rhosigma^{-1/s}]$ and $q = q_\a$, the quantile of
order $1-\a$ of the standard Gaussian distribution. Then
\begin{align}
&\varlimsup\limits_{\s\to0} \alpha(\psi_\s,\thetazero) \leq \a \,,\\
&\lim_{\s\to 0} \beta(\psi_\s,\thetaun) = 0 \,.
\end{align}
\end{theorem}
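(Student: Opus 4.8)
The plan is to treat the two error probabilities separately, exploiting that the minimisation over $\tau$ always pushes the statistic $\lambda_\s(N)$ downward, which is favourable under $\hzero$ and is the source of all the difficulty under $\hun$. Throughout I write $S_N(\tau)=\sum_{j=1}^N|Y_j-e^{-\ii j\tau}\Ydiese_j|^2$ and $D_N^2(\tau)=\sum_{j=1}^N|c_j-e^{-\ii j\tau}\cdiese_j|^2$, so that $\lambda_\s(N)=\frac{1}{4\s^2\sqrt N}\min_\tau S_N(\tau)-\sqrt N$. For the first-kind error, under $\hzero$ the infimum in~\eqref{distance} is attained at some $\tau_0$ by compactness of $[0,2\pi]$, whence $c_j=e^{-\ii j\tau_0}\cdiese_j$ for every $j$ and $Y_j-e^{-\ii j\tau_0}\Ydiese_j=\s(\xi_j-e^{-\ii j\tau_0}\xidiese_j)$ is pure noise. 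Since $\min_\tau S_N(\tau)\le S_N(\tau_0)$, I would bound $\a(\psi_\s,\thetazero)\le\sup_{\thetazero}\pccdiese\big(\frac{S_N(\tau_0)}{4\s^2\sqrt N}-\sqrt N>q_\a\big)$. By rotational invariance of the complex Gaussian law the variables $\frac14|\xi_j-e^{-\ii j\tau_0}\xidiese_j|^2$ are i.i.d. with mean $1$ and variance $1$, and their joint law depends neither on $(\bc,\bcdiese)$ nor on $\tau_0$; the central limit theorem then shows this probability converges to $1-\Phi(q_\a)=\a$ uniformly over $\thetazero$, giving the first claim.

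For the second-kind error I expand
\begin{equation*}
S_N(\tau)=D_N^2(\tau)+4N\s^2+2\s X_N(\tau)+\s^2\big(Z_N(\tau)-4N\big),
\end{equation*}
with $X_N(\tau)=\Re\sum_{j=1}^N\overline{(c_j-e^{-\ii j\tau}\cdiese_j)}(\xi_j-e^{-\ii j\tau}\xidiese_j)$ and $Z_N(\tau)=\sum_{j=1}^N|\xi_j-e^{-\ii j\tau}\xidiese_j|^2$, both centred. The deterministic term is handled first: for every $\tau$ the Sobolev constraint gives $\sum_{j>N}|c_j-e^{-\ii j\tau}\cdiese_j|^2\le 4L^2N^{-2s}$, hence $D_N^2(\tau)\ge d^2(\bc,\bcdiese)-4L^2N^{-2s}$, and with $N=N_\s=[c_{s,L}\rhosigma^{-1/s}]$ and $d(\bc,\bcdiese)\ge C\rhosigma$ this yields the uniform lower bound $\min_\tau D_N^2(\tau)\ge(C^2-4L^2c_{s,L}^{-2s})\rhosigma^2$, which is exactly the first term of the hypothesis on $C^2$. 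A direct computation using $\rhosigma^{(4s+1)/2s}=\s^2\sqrt{\log\s^{-1}}$ shows that after division by $4\s^2\sqrt N$ this contributes a signal of order $\sqrt{\log\s^{-1}}\to+\infty$.

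It remains to show that the two centred terms cannot cancel this signal. The cross term is mild: since $X_N(\tau)$ is Gaussian with variance $2D_N^2(\tau)$, the elementary inequality $2\s|X_N(\tau)|\le\e D_N^2(\tau)+\e^{-1}\s^2X_N(\tau)^2/D_N^2(\tau)$, combined with a uniform-in-$\tau$ bound of order $\log N$ on $X_N(\tau)^2/D_N^2(\tau)$, lets me absorb it into an arbitrarily small fraction $\e$ of the signal plus a term of order $\s^2\log N$ that is negligible after scaling. Thus the cross term does not affect the separation constant, only the signal prefactor, which one recovers by letting $\e\to0$ slowly.

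The quadratic term is decisive. Writing $Z_N(\tau)=\sum_{j=1}^N(|\xi_j|^2+|\xidiese_j|^2)-2\Re\sum_{j=1}^N e^{-\ii j\tau}\overline{\xi_j}\xidiese_j$, the first sum is $\tau$-free and fluctuates by $O_P(\sqrt N)$, hence is negligible, while the second is a random trigonometric polynomial of degree $N$. I expect the hard part to be a sharp uniform control of its supremum over $\tau\in[0,2\pi]$: a maximal inequality (chaining on the $O(N)$ points that determine a degree-$N$ polynomial, or a Salem--Zygmund type estimate) should give $\sup_\tau|\Re\sum_{j=1}^N e^{-\ii j\tau}\overline{\xi_j}\xidiese_j|\lesssim\sqrt{N\log N}$ with overwhelming probability, and tracking the constants (the coefficients have second moment $4$, and $\log N\sim\frac{2}{4s+1}\log\s^{-1}$) is what produces, after division by $4\s^2\sqrt N$, the competing term of order $\sqrt{\log\s^{-1}}$ whose constant is precisely $\sqrt{256\,c_{s,L}/(4s+1)}$. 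Choosing $C^2$ above the sum of this fluctuation constant and the bias constant $4L^2c_{s,L}^{-2s}$ then makes $\min_\tau S_N(\tau)/(4\s^2\sqrt N)-\sqrt N\to+\infty$ in $\pccdiese$-probability uniformly over $\thetaun$, so that $\b(\psi_\s,\thetaun)\to0$.
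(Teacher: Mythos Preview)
Your proposal is correct and follows essentially the same route as the paper: the identical decomposition of $S_N(\tau)$, Lemma~\ref{lemma:1} for the bias, CLT/Berry--Esseen for the $\tau$-free quadratic piece, and the two maximal inequalities you anticipate---Berman's formula (Lemma~\ref{lemma:5}) for $\sup_\tau X_N(\tau)/D_N(\tau)$ and a Salem--Zygmund-type bound (Lemma~\ref{lemma:4}) for the random trigonometric polynomial---are precisely the paper's tools. The only cosmetic difference is that the paper keeps $D_N^2(\tau)+2\s X_N(\tau)$ together and minimises the parabola $x^2-ax$ over $x\ge x_0$ (Lemma~\ref{lemma:2}) instead of your Young-inequality-with-$\e\to0$ device, but both reductions rest on the same supremum estimate and neither lets the cross term enter the separation constant; note the small slip $\log N_\s\sim\frac{4}{4s+1}\log\s^{-1}$, not $\frac{2}{4s+1}$.
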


\begin{remark}
In the rest of this section and in the proof, we skip the dependence of
$N_\s(s,L)$ in $s$ and $L$ when no confusion is possible.
\end{remark}

The proof of this result is given in
Section~\ref{section:proofupperbound}. Let us now develop a brief
heuristic describing how one could have guessed the optimal value of
$\rhosigma$.

\subsection*{Heuristic for the performance of the nonadaptive procedure}

Our proof will show that, under $\hzero$, $\l_\s(N_\s)$ is bounded from
above in probability. Thus, we decide to reject the null hypothesis
when $\l_\s(N_\s)$ is larger than a constant to be chosen properly.

On the other hand, we inspect the behaviour of the statistic under the
alternative hypothesis and give a condition on $\rhosigma$ under which
the test statistic is orders of magnitude larger than a constant, so
that the procedure can have the desired power.

We derive the lower bound
\begin{align}
\l_\s(N_\s) \geq\; &\frac{1}{4\sqrt{N_\s}\s^2} \min_\tau\somNsigma |c_j-e^{-\ii j\tau}\cdiese_j|^2  - \Big|\somNsigma \frac{|\xi_j|^2+|\xidiese_j|^2-4}{4\sqrt{N_\s}}\Big| \\ &- \frac{1}{2\sqrt{N_\s}} \max_\tau \Big| \somNsigma \Re \big(e^{\ii j\tau}\xi_j\overline{\xidiese_j}\big)\Big| + \text{ negligible terms}.\nonumber
\end{align}
The proof will establish that the second term is bounded in
probability, while the third, that we call perturbative, is of order
$\sqrt{\log N_\s}$. The first term, up to a $4\sqrt{N_\s}\s^2$ factor,
is an approximation of the square of the pseudo-distance
$d(\bc,\bcdiese)$. Since $\bc$ and $\bcdiese$ lie in $\FsL$, the
remainder of the sum can be bounded from above, up to a constant
factor, by $N_\s^{-2s}$. In a nutshell, we get the heuristical lower
bound
\begin{equation*}
\l_\s(N_\s)\geq Cste \cdot \Big( \frac{d^2(\bc,\bcdiese)-Cste \cdot N_\s^{-2s}}{\sqrt{N_\s}\s^2} - O_P(\sqrt{\log N_\s}) \Big).
\end{equation*}
Consequently, the alternative is detected as soon as
\begin{equation*}
\rhosigma^2 \gg \max\Big( \s^2\sqrt{N_\s}, N_\s^{-2s}, \s^2\sqrt{N_\s\log N_\s} \Big) \sim \Big(\s^2\sqrt{\log\s^{-1}}\Big)^\frac{4s}{4s+1}.
\end{equation*}

\subsection*{Heuristic for the constant $C$}

We may now ask how small the constant $C$ can be without making our
testing procedure inefficient. This constant is only optimized for our
test, and we do not claim it to be optimal in the minimax sense.

The previous optimization shows that the test achieves its best rate
when $N_\s$ is of the order of ${\rhosigma^*}^{-1/s}$. Now, denoting
$N_\s=[c{\rhosigma^*}^{-1/s}]$, a similar heuristic can give an
optimized constant $C$ in the definition of $\thetaun$. Indeed,
Lemma~\ref{lemma:6} gives the more precise lower bound
$(C^2-4L^2c^{-2s}){\rhosigma^*}^2$ for the sum in the first term, and
we will prove the exact order of magnitude of the third to be
$\sqrt{\frac{256\,c}{4s+1}\log N_\s}$. Thus
\begin{equation*}
\l_\s(N_\s)\geq \Big( C^2-4L^2c^{-2s}-\sqrt{\frac{256\,c}{4s+1}} \Big)\sqrt{\log N_\s}.
\end{equation*}
and this leads to a minimization problem determining the choice
of $c$ (\cf Theorem~\ref{theorem:upperbound}).

\section{Adaptive testing procedure}\label{section:adaptiveupperbound}

The procedure given in the previous section possesses asymptotic
minimax optimality properties thanks to an appropriate choice of the
tuning parameter $N_\s$, but the practician needs to determine values
of $s$ and $L$ to implement the test. As it seems arbitrary and
nonintuitive to make assumptions on the smoothness of the signals, it
is necessary to design testing procedures independent of $s$ and $L$
that are nearly as good, in the minimax sense, as the procedure
proposed in the previous section.

In this section, we only assume that an interval $[s_1,s_2]$ is
available such that $c,\cdiese\in\FsL$ for some $s\in[s_1,s_2]$ and
$L\in[0,\pinf[$. We propose a testing procedure depending on $s_1$ and
$s_2$ but independent of $s$ and $L$, that achieves the same rate of
separation, \ie $\big(\s^2\sqrt{\log\s^{-1}}\big)^{2s/4s+1}$, as the
test based on the precise knowledge of $s$ and $L$. Furthermore, this
rate is achieved uniformly over the Sobolev classes $\FsL$ with
$s\in[s_1,s_2]$ and $L$ belonging to any compact interval included in
$\RR^+$.

Here is the idea of its construction. The nonadaptive testing procedure
proposed above depends on $s$ only via the tuning parameter
$N_\s(s,L)$. In the followings, we will change the definition of
$N_\s(s,L)$ to avoid the dependence on $L$ and we will write $N_\s(s)$.
Using a Bonferroni procedure like in \citet*{GayraudPouet2005} or
\citet*{HorowitzSpokoiny2001}, we consider the maximum of these tests
for several values of $N_\s(s)$, more precisely, we consider tests of
the form $\tilde\psi_\s(q) = \max_{N\in\calN} \psi_\s(N,q)$. For this
kind of test, the next proposition gives bounds for the first and
second type errors:

\begin{proposition}
Let $\calN$ be a set of positive integers and denote $\tilde\psi_\s(q)$
the test $\max_{N\in\calN} \psi_\s(N,q)$, where $\psi_\s$ is defined
in~\ref{definitionofthetest}, then
\begin{align*}
\begin{cases}
\ &\a(\tilde\psi_\s(q),\thetazero) \leq \sum_{N\in\calN} \a(\psi_\s(N,q),\thetazero)
\\[2pt]
\ & \b(\tilde\psi_\s(q),\thetaun^{s,L}) \leq \min_{N\in\calN} \b(\psi_\s(N,q),\thetaun^{s,L})  .
\end{cases}
\end{align*}
\end{proposition}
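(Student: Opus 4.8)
The proposition bounds the two error types of the Bonferroni test $\tilde\psi_\s(q) = \max_{N\in\calN}\psi_\s(N,q)$ in terms of the errors of the individual tests $\psi_\s(N,q)$. The key observation is that, since each $\psi_\s(N,q) = \fcar_{\{\l_\s(N)>q\}}$ takes values in $\{0,1\}$, the maximum over $N\in\calN$ is itself a nonrandomized test, equal to $1$ precisely when at least one of the $\l_\s(N)$ exceeds $q$, and equal to $0$ only when all of them fall below $q$.

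\medskip

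\noindent\textbf{First type error.} The plan is to fix an arbitrary $(\bc,\bcdiese)\in\thetazero$ and bound $\Eccdiese(\tilde\psi_\s(q)) = \pccdiese(\tilde\psi_\s(q)=1)$. By the remark above, the event $\{\tilde\psi_\s(q)=1\}$ is the union $\bigcup_{N\in\calN}\{\psi_\s(N,q)=1\}$, so a union bound gives
\begin{equation*}
\pccdiese(\tilde\psi_\s(q)=1) \leq \sum_{N\in\calN}\pccdiese(\psi_\s(N,q)=1) = \sum_{N\in\calN}\Eccdiese(\psi_\s(N,q)).
\end{equation*}
Taking the supremum over $\thetazero$ on both sides and using that the supremum of a sum is bounded by the sum of the suprema yields the claimed inequality $\a(\tilde\psi_\s(q),\thetazero)\leq\sum_{N\in\calN}\a(\psi_\s(N,q),\thetazero)$.

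\medskip

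\noindent\textbf{Second type error.} Here I would fix $(\bc,\bcdiese)\in\thetaun^{s,L}$ and control $\Eccdiese(1-\tilde\psi_\s(q)) = \pccdiese(\tilde\psi_\s(q)=0)$. Now $\{\tilde\psi_\s(q)=0\}$ is the intersection $\bigcap_{N\in\calN}\{\psi_\s(N,q)=0\}$, so this event is contained in each individual event $\{\psi_\s(N,q)=0\}$. Hence, for every fixed $N\in\calN$,
\begin{equation*}
\pccdiese(\tilde\psi_\s(q)=0)\leq\pccdiese(\psi_\s(N,q)=0)=\Eccdiese(1-\psi_\s(N,q)),
\end{equation*}
and since this holds for all $N\in\calN$, I can pass to the minimum on the right. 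Taking the supremum over $\thetaun^{s,L}$ and noting that $\sup_{\thetaun^{s,L}}\min_{N}(\cdot)\leq\min_{N}\sup_{\thetaun^{s,L}}(\cdot)$ gives the second inequality.

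\medskip

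\noindent This argument is essentially elementary set-theoretic bookkeeping, so there is no serious obstacle. The only point that requires mild care is the exchange of supremum and minimum in the second bound: the inequality $\sup\min\leq\min\sup$ always holds in the correct direction for our purposes, but one must verify that it is this direction, and not the reverse, that the claim demands. I would write out this step explicitly to avoid confusion.
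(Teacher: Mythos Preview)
Your proof is correct and follows essentially the same route as the paper's: a union bound for the first kind error and the inclusion $\{\tilde\psi_\s(q)=0\}\subseteq\{\psi_\s(N,q)=0\}$ for each $N$ for the second kind error, followed by the $\sup\min\leq\min\sup$ interchange. The only difference is cosmetic: the paper writes the events in terms of the underlying statistic rather than the indicator tests, but the logic is identical.
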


Consequently, the set $\calN$ has to be as small as possible (to
control the first kind error), but rich enough to approximate the set
of all $N_\s(s)$ for $s\in[s_1,s_2]$. We will show in the proof that
each $N\in\calN$ brings adaptation over all Sobolev balls of regularity
$s$ such that there is a $S$ such that $N=N_\s(S)$ and $S\leq s \leq
S+1/\log\s^{-1}$. Hence, we introduce the following notation leading to
a proper choice of $\calN$.

\enlargethispage{12pt}
For every $s_2 > s_1 >0$, define
\begin{equation}\label{definitionoftheadaptivetest}
\begin{cases}
\ \S(s_1,s_2) = \Big\{ s_1 + \frac{j}{\log\s^{-1}} \,|\, j\geq0, s_1 + \frac{j}{\log\s^{-1}} \leq s_2   \Big\},
\phantom{\Big()}\\[4pt]
\ \calN(s_1,s_2) = \Big\{ N_\s(s)=\big[\rhosigma^*(s)^{-1/s}\big] \,|\, s\in\S(s_1,s_2) \Big\}. \phantom{\Big()} \\
\end{cases}
\end{equation}

\begin{theorem}\label{theorem:adaptiveupperbound}
Set
\begin{equation}
\begin{cases}
\ \thetazero = \Big\{ (\bc,\bcdiese)\in l_2\times l_2 \,|\, d(\bc,\bcdiese) = 0 \Big\},
\\[4pt]
\ \thetaun^{s,L} = \Big\{ (\bc,\bcdiese)\in\FsL\times\FsL \,|\, d(\bc,\bcdiese) \geq \rhosigma(s) \Big\},
\end{cases}
\end{equation}
with $C>0$, $\rhosigma(s)=C\rhosigma^*(s)$,
$\rhosigma^*(s)=\big(\s^2\sqrt{\log\s^{-1}}\big)^\frac{2s}{4s+1}$.
\eject

Consider the test $\tilde{\psi}_\s = \max_{N\in\calN(\s_1,\s_2)}
\,\psi_\s\big(N,\sqrt{2\log\log\s^{-1}}\big)$,  where $\psi_\s$ is
defined in (\ref{definitionofthetest}). Then, for the interval
$[s_1,s_2]$ used in the construction of the test $\tilde\psi_\sigma$
and for any interval $[L_1,L_2]$ included in $\mathbb R^+_*$, there is
a constant $C$ such that
\begin{align}
&\lim_{\s\to 0} \a\big(\tilde{\psi}_\s,\thetazero\big) = 0,\\
&   \lim_{\s\to 0} \sup_{[L_1,L_2]}\sup_{[s_1,s_2]}\b\big(\tilde{\psi}_\s,\thetaun^{s,L}\big) = 0,
\end{align}
\end{theorem}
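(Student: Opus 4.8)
The plan is to reduce the analysis of $\tilde{\psi}_\s$ to that of the single-$N$ tests $\psi_\s(N,q)$ through the Bonferroni inequalities of the Proposition, taking $q=\sqrt{2\log\log\s^{-1}}$, and then to reuse the estimates behind Theorem~\ref{theorem:upperbound}. The only structural fact needed about the grid is that $|\calN(s_1,s_2)|\leq|\S(s_1,s_2)|\sim(s_2-s_1)\log\s^{-1}$, so that $\calN$ contains only logarithmically many values of $N$.

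For the first-kind error, the Proposition gives $\a(\tilde{\psi}_\s,\thetazero)\leq\sum_{N\in\calN}\a(\psi_\s(N,q),\thetazero)$. Under $\hzero$ one has $c_j=e^{-\ii j\tau_0}\cdiese_j$ for some $\tau_0$, so that choosing $\tau=\tau_0$ in the minimum lets me bound the upper tail of $\l_\s(N)$ by that of the pure-noise statistic $\frac{1}{4\sqrt N}\sum_{j=1}^N\big(|\xi_j-e^{-\ii j\tau_0}\xidiese_j|^2-4\big)$; the minimization over $\tau$ can only decrease $\l_\s(N)$ and is harmless here. This bounding statistic is centered with variance exactly $1$ and converges in law to $\nzeroun$, and I would complement this by the uniform moderate-deviation tail $\prob(\l_\s(N)>q)\lesssim q^{-1}e^{-q^2/2}$, giving $\a(\psi_\s(N,q),\thetazero)\lesssim q^{-1}e^{-q^2/2}$ for all $N$. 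For $q=\sqrt{2\log\log\s^{-1}}$ this is of order $(\log\log\s^{-1})^{-1/2}(\log\s^{-1})^{-1}$, and summing over the $O(\log\s^{-1})$ indices in $\calN$ leaves the factor $(\log\log\s^{-1})^{-1/2}\to0$. The prefactor $q^{-1}$ is exactly what lets the union bound beat the cardinality of $\calN$, which is the reason for the threshold $\sqrt{2\log\log\s^{-1}}$.

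For the second-kind error, the Proposition gives $\b(\tilde{\psi}_\s,\thetaun^{s,L})\leq\min_{N\in\calN}\b(\psi_\s(N,q),\thetaun^{s,L})$, so it suffices to exhibit, for each $(s,L)\in[s_1,s_2]\times[L_1,L_2]$, one index $N\in\calN$ for which $\psi_\s(N,q)$ is powerful. I would take $S$ to be the largest point of $\S(s_1,s_2)$ with $S\leq s$, so that $0\leq s-S\leq1/\log\s^{-1}$, and set $N=N_\s(S)=\big[\rhosigma^*(S)^{-1/S}\big]\in\calN$. Starting from the lower bound on $\l_\s(N)$ derived in the proof of Theorem~\ref{theorem:upperbound} and bounding the Sobolev tail as in Lemma~\ref{lemma:6}, the signal part satisfies $\min_\tau\sum_{j=1}^N|c_j-e^{-\ii j\tau}\cdiese_j|^2\geq\rhosigma(s)^2-4L^2N^{-2s}$, so the leading term of $\l_\s(N)$ is $\frac{\rhosigma^*(s)^2}{4\sqrt N\s^2}\big(C^2-4L^2N^{-2s}/\rhosigma^*(s)^2\big)$. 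The decisive computation is that tuning with $S$ instead of $s$ costs only a constant factor: since $\big(\s^2\sqrt{\log\s^{-1}}\big)^{1/\log\s^{-1}}\to e^{-2}$,
\[
\frac{N^{-2s}}{\rhosigma^*(s)^2}=\big(\s^2\sqrt{\log\s^{-1}}\big)^{\frac{16s(s-S)}{(4S+1)(4s+1)}}\in(0,1],\qquad\frac{\rhosigma^*(s)^2}{4\sqrt N\s^2}\asymp\sqrt{\log\s^{-1}}\asymp\sqrt{\log N}.
\]
Hence the coefficient is at least $C^2-4L_2^2$; the second (centered chi-square) noise term in the lower bound is $O_P(1)$ and the threshold $q=\sqrt{2\log\log\s^{-1}}$ is $o(\sqrt{\log N})$, so choosing $C$ with $C^2-4L_2^2$ larger than the constant multiplying $\sqrt{\log N}$ in the perturbative term forces $\l_\s(N)\geq\mathrm{const}\cdot\sqrt{\log N}\gg q$ with probability tending to $1$, uniformly over $[s_1,s_2]\times[L_1,L_2]$.

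The main obstacle is the uniform control of the perturbative term $\frac{1}{2\sqrt N}\max_\tau\big|\sum_{j=1}^N\Re\big(e^{\ii j\tau}\xi_j\overline{\xidiese_j}\big)\big|$, which must be shown to be $O(\sqrt{\log N})$ simultaneously for all $N\in\calN$. I would obtain this by a chaining argument over $\tau\in[0,2\pi]$ --- discretizing $\tau$ on a grid fine enough that the increments of the Gaussian chaos are controlled through its Lipschitz constant, applying a maximal inequality on the grid, and taking a union bound over the $O(\log\s^{-1})$ elements of $\calN$. A secondary, essentially bookkeeping, difficulty is to make every constant above (the comparison constants in the displayed estimates, the bound $C^2-4L_2^2$, and the high-probability bounds on the two noise terms) uniform over $(s,L)\in[s_1,s_2]\times[L_1,L_2]$, so that a single $C$ serves; this follows from compactness of the parameter ranges and continuity in $(s,L)$.
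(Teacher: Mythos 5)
Your proposal follows the paper's own route essentially step for step: the Bonferroni reduction of Proposition~1, Berry--Esseen moderate deviations against the threshold $\sqrt{2\log\log\s^{-1}}$ so that the $e^{-q^2/2}=(\log\s^{-1})^{-1}$ factor cancels the $O(\log\s^{-1})$ cardinality of $\calN(s_1,s_2)$, and, for the power, selection of the grid point $S\leq s\leq S+1/\log\s^{-1}$ together with the constant-factor bias comparison that the paper packages as Lemma~\ref{lemma:6}. The only real divergence is that you propose to establish the uniform $O(\sqrt{N\log N})$ control of $\max_\tau\big|\sum_{j}\Re(e^{\ii j\tau}\xi_j\overline{\xidiese_j})\big|$ by a chaining/discretization argument where the paper invokes Lemma~\ref{lemma:4} (quoted from Collier--Dalalyan); your sketch is plausible for this degenerate second-order Gaussian chaos and the theorem only needs existence of some constant $C$, so the looser constants it would produce are harmless, but it is the one step you leave unexecuted.
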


\begin{remark}
In the statement of this theorem, one observes that the constants $L_1$
and $L_2$ are not used in the definition of the test, while $L$ was, in
the definition of the nonadaptive procedure. Indeed, we optimized the
separation constant $C$ and gave an expression depending on $L$, while
this optimization was not our matter in the second theorem.
\end{remark}

\begin{remark}
The theorem claims that there exists a value of $C$ for which the first
and second type errors can be controlled. From the proof of the
theorem, we see that it is sufficient that such a constant satisfies
\begin{equation*}
\begin{cases}
\ C^2-4L_2^2e^\frac{8}{(4s_1+1)^2}-\frac{C}{2} > 0 \\
\ C > \frac{64}{\sqrt{4s_1+1}}
\end{cases},
\end{equation*}
which is verified when
$C>\max\big(\frac{64}{\sqrt{4s_1+1}},\frac{1}{4}+(\frac{1}{16}+4L_2^2e^\frac{8}{(4s_1+1)^2})^{1/2}\big)$.
\end{remark}

\subsection*{Heuristic for the performance of the adaptive procedure}

Here we explain why our adaptive procedure achieves the same rate as
the nonadaptive one. The heuristic of the previous section roughly
holds, with this difference that $\max_N \l_\s(N)$ is of loglog-order
under the null hypothesis. But this term is negligible in view of the
perturbative term, so that the performances of the test do not
deteriorate in the adaptive problem.

\section{Lower bound for the minimax rate}\label{section:lowerbound}

After stating the performance of our tests, we prove in this section
that they are at least nearly rate optimal. Indeed, we are able to
establish a lower bound for our model, by proving that the detection of
a signal lying in a Sobolev ball when the separation from $0$ is
measured by the $l_2$-norm (\cf (\ref{classicalmodel}) for a
precise definition) is simpler than ours, in the sense that every
lower bound result for this model is adaptable for our purpose.

Let us first introduce the classical signal detection problem, for
which the minimax separation rate, and even the exact separation
constants, are known:
\begin{equation}\label{classicalmodel}
\begin{cases}
\ Y_j = c_j + \s\xi_j,\quad j=1,2,\ldots, \phantom{\bigg(\bigg)} \\[-6pt]
\ \thetazero^{\bf class} = \{0\}, \phantom{\bigg(\bigg)}\\[-6pt]
\ \thetaun^{\bf class} = \Big\{ \bc\in\FsL \,\big|\, \|\bc\|_2 \geq C\rhosigma \Big\}. \phantom{\bigg(\bigg)}
\end{cases}
\end{equation}
For this model, we define the errors of first and second kind of a test $\psi^{\bf class}$ by
\begin{equation}
\begin{cases}
\ \a^{\bf class}(\psi^{\bf class},\thetazero^{\bf class}) = \sup_{\thetazero^{\bf class}} \esp_{\bc}\big(\psi^{\bf class}\big), \\[3pt]
\ \b^{\bf class}(\psi^{\bf class},\thetaun^{\bf class}) = \sup_{\thetaun^{\bf class}} \esp_{\bc}\big(1-\psi^{\bf class}\big),
\end{cases}
\end{equation}
where we denote $\prob_{\bc}$ the probability engendered by
$\bY=(Y_1,Y_2,\ldots)$ when $(c_1,c_2,\ldots) = \bc$.

\begin{theorem}\label{theorem:lowerbound}
Given the two models exposed in~(\ref{model}) and~(\ref{classicalmodel}), we have
\begin{equation}
\inf_{\psi_\a} \b(\psi_\a,\thetaun) \geq \inf_{\psi_\a^{\bf class}} \b^{\bf class}(\psi_\a^{\bf class},\thetaun^{\bf class})\,,
\end{equation}
where the infima are taken over all tests of level $\a$ respectively for our model and for the classical one.
\end{theorem}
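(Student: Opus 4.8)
The plan is to prove the inequality by a reduction argument, exhibiting the classical detection problem \eqref{classicalmodel} as a sub-problem of the shifted curve model \eqref{model}. To every level-$\a$ test $\psi$ for our model I will associate a level-$\a$ test $\psi^{\bf class}$ for the classical model whose second-kind error is no larger, so that $\inf_{\psi^{\bf class}_\a}\b^{\bf class}(\psi^{\bf class}_\a,\thetaun^{\bf class})\le\b(\psi,\thetaun)$ for every such $\psi$; taking the infimum over $\psi$ then yields the claim.

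The embedding rests on a single observation: by the definition \eqref{distance} of the pseudo-distance, the term $e^{-\ii j\tau}\cdot 0$ vanishes, so $d(\bc,0)^2=\inf_\tau\sum_{j}|c_j|^2=\|\bc\|_2^2$. Hence, fixing the second coordinate $\bcdiese=0$ (which lies in $\FsL$ for every $L\ge0$) sends the pair $(\bc,0)$ into $\thetazero$ exactly when $\bc=0$, and into $\thetaun$ exactly when $\bc\in\FsL$ with $\|\bc\|_2\ge C\rhosigma$, that is, when $\bc\in\thetaun^{\bf class}$. In particular $(0,0)\in\thetazero$ and $\{(\bc,0):\bc\in\thetaun^{\bf class}\}\subset\thetaun$. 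Under the parameter $(\bc,0)$ the second channel of \eqref{model} reduces to $\Ydiese_j=\s\xidiese_j$, i.e. pure noise independent of $\bY$, which is precisely what allows a test of our model to be transported to the classical model.

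Concretely, given the classical data $\bY$, I simulate an independent copy $\bxidiese$ of the noise sequence of \eqref{model}, with law $\prob$, and average the original test over it:
\begin{equation*}
\psi^{\bf class}(\bY)=\int\psi(\bY,\s\bxidiese)\,d\prob(\bxidiese)\in[0,1].
\end{equation*}
This is a genuine randomized test measurable with respect to $\bY$. By Fubini, under $\prob_{\bc}$ the pair $(\bY,\s\bxidiese)$ has exactly the law $\pccdiese$ at $(\bc,0)$, whence $\esp_{\bc}(\psi^{\bf class})=\Eccdiese(\psi)$ evaluated at $(\bc,0)$ for every $\bc$. Consequently $\a^{\bf class}(\psi^{\bf class},\thetazero^{\bf class})=\Eccdiese(\psi)\big|_{(0,0)}\le\sup_{\thetazero}\Eccdiese(\psi)=\a(\psi,\thetazero)\le\a$, so $\psi^{\bf class}$ has level $\a$; and using the inclusion above,
\begin{equation*}
\b^{\bf class}(\psi^{\bf class},\thetaun^{\bf class})=\sup_{\bc\in\thetaun^{\bf class}}\Eccdiese(1-\psi)\big|_{(\bc,0)}\le\sup_{\thetaun}\Eccdiese(1-\psi)=\b(\psi,\thetaun).
\end{equation*}
Taking the infimum over all level-$\a$ tests $\psi$ of our model gives the theorem.

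There is no genuinely hard computation here; the entire content is the choice of embedding. The one point demanding care is the randomization: a test for our model depends on both channels $(\bY,\bYdiese)$, so to produce an admissible classical test one must discard the superfluous channel, and averaging over simulated noise (rather than retaining it as external randomness) is the clean way to remain within the stated class of $\bY$-measurable randomized tests while preserving both error probabilities exactly. The only modelling facts invoked beyond this are that $0\in\FsL$ and that $d(\cdot,0)$ coincides with the $l_2$-norm, which together make the classical alternative land inside $\thetaun$.
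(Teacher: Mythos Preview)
Your proof is correct and follows essentially the same approach as the paper: embed the classical problem into the shifted curve model by setting $\bcdiese=0$, use $d(\bc,0)=\|\bc\|_2$ to obtain the inclusions $\thetazero^{\bf class}\times\{0\}\subset\thetazero$ and $\thetaun^{\bf class}\times\{0\}\subset\thetaun$, and turn any level-$\a$ test $\psi$ of our model into a level-$\a$ classical test by averaging $\psi(\bY,\cdot)$ over simulated noise in the second channel, invoking Fubini to compare the errors. The paper's write-up is briefer but the construction and the key observations are identical.
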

Thus, our model can benefit from every lower bound result on
model~(\ref{classicalmodel}). We choose to exploit the nonasymptotic
results presented in \citet*{Baraud2002}, Proposition 3. The following
theorem shows that the asymptotic minimax separation rate for our
problem is not smaller than $\s^{4s/4s+1}$.
\begin{corollary*}
Let $\a$ and $\b$ be in $]0,1]$. Define $\eta= 2(1-\a-\b)$,
$\mathcal{L}=\log(1+\eta^2)$ and $\rho^2 = \sup_{d\geq1} \big[
\sqrt{2\mathcal{L}d}\s^2 \wedge L^2 d^{-2s} \big]$. Then
\begin{equation}
\rhosigma\leq\rho \quad \Rightarrow \quad \inf_{\psi^\a} \b\Big(\psi^\a,\thetaun\Big) \geq \b,
\end{equation}
where the infimum is taken over all tests of level $\a$ for the shifted curve model.
\end{corollary*}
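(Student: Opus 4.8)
The plan is to obtain the corollary as a direct consequence of Theorem~\ref{theorem:lowerbound} combined with a nonasymptotic Bayesian lower bound for the classical signal detection model~(\ref{classicalmodel}), which is precisely the content of \citet*{Baraud2002}, Proposition~3. First I would invoke Theorem~\ref{theorem:lowerbound}, which for any level-$\a$ test gives $\inf_{\psi^\a}\b(\psi^\a,\thetaun)\geq\inf_{\psi^{\bf class}}\b^{\bf class}(\psi^{\bf class},\thetaun^{\bf class})$. It therefore suffices to bound the second-kind error from below in the classical model, where the null is the single point $\{0\}$ and the alternative is $\{\bc\in\FsL:\|\bc\|_2\geq\rhosigma\}$. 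This reduction is the genuinely model-specific step; since it is already established, the remainder of the argument is a standard detection lower bound and I can simply match it to Baraud's hypotheses.

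Second, I would set up the Bayesian construction underlying that bound. For a fixed integer $d\geq1$, place a prior supported on the first $d$ coordinates, taking $c_j=\frac{\rhosigma}{\sqrt d}\varepsilon_j$ for $j\leq d$ with $\varepsilon_j$ independent Rademacher signs and $c_j=0$ for $j>d$. Every such $\bc$ satisfies $\|\bc\|_2=\rhosigma$, so it lies in $\thetaun^{\bf class}$ as soon as it lies in $\FsL$; the Sobolev constraint $\sum_j j^{2s}|c_j|^2=\frac{\rhosigma^2}{d}\sum_{j\leq d}j^{2s}\leq\rhosigma^2 d^{2s}$ holds precisely when $\rhosigma^2\leq L^2 d^{-2s}$. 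Writing $\bar P_1$ for the resulting Gaussian mixture and $P_0$ for the null law, the chi-square divergence factorizes over coordinates and, via $\cosh x\leq e^{x^2/2}$, is bounded by $\exp(\rhosigma^4/(2d\s^4))-1$. The condition $\rhosigma^2\leq\sqrt{2\mathcal{L}d}\,\s^2$ forces this quantity to be at most $e^{\mathcal{L}}-1=\eta^2$.

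Third comes the conversion to a testing bound. Since the null is a single point, $\a(\psi)=\esp_{0}(\psi)$, and the affinity inequality $\esp_{0}(\psi)+\esp_{\bar P_1}(1-\psi)\geq 1-\tfrac12\sqrt{\chi^2(\bar P_1,P_0)}$ gives, for any level-$\a$ test, $\esp_{\bar P_1}(1-\psi)\geq 1-\tfrac{\eta}{2}-\a$. Because the prior is supported on $\thetaun^{\bf class}$, the supremum dominates the mixture average, so $\b^{\bf class}(\psi)\geq\esp_{\bar P_1}(1-\psi)\geq 1-\tfrac{\eta}{2}-\a=\b$, using $\eta=2(1-\a-\b)$. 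It remains to choose $d$: the feasibility constraint $\rhosigma^2\leq L^2 d^{-2s}$ (decreasing in $d$) and the indistinguishability constraint $\rhosigma^2\leq\sqrt{2\mathcal{L}d}\,\s^2$ (increasing in $d$) can be met simultaneously for some $d$ exactly when $\rhosigma^2\leq\sup_{d\geq1}[\sqrt{2\mathcal{L}d}\,\s^2\wedge L^2 d^{-2s}]=\rho^2$, which is the hypothesis.

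The main obstacle is the joint feasibility encoded by the $\sup$ and the $\wedge$: one must pick a single dimension $d$ that keeps the prior inside the Sobolev ball while keeping the mixture statistically indistinguishable from the null, and the formula for $\rho$ is exactly the critical value at which such a $d$ ceases to exist. Since \citet*{Baraud2002}, Proposition~3 already carries out this optimization (together with the $\cosh$/chi-square computation) for the classical $l_2$-separated Sobolev problem, the only verification left after the reduction is that the alternative in~(\ref{classicalmodel}) coincides with the one to which Baraud's proposition applies, after which the corollary follows.
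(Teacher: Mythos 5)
Your proposal is correct and follows essentially the same route as the paper: reduce to the classical signal detection model~(\ref{classicalmodel}) via Theorem~\ref{theorem:lowerbound}, then invoke the nonasymptotic lower bound of \citet*{Baraud2002}, Proposition~3 (whose Rademacher-prior and $\chi^2$ computation you correctly reconstruct, including the optimization over $d$ encoded in the definition of $\rho$). The paper itself stops at the citation, so your sketch simply supplies the details it leaves implicit.
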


\begin{remark}
We can approximate $\rho$ by computing
$$\sup_{x\in\RR^+} \big[
\sqrt{2\mathcal{L}x}\s^2 \wedge L^2 x^{-2s} \big] = L^\frac{1}{4s+1}
\big(\s^2\sqrt{2\mathcal{L}}\big)^\frac{2s}{4s+1}.$$
\end{remark}

\begin{remark}
Our proof shows that every lower bound result for adaptive testing
could be used for our purpose as well, for instance
\citet*{GayraudPouet2005}.
\end{remark}

\vspace*{-3pt}
\section{Discussion}\label{section:discussion}

\vspace*{-3pt}
\subsection*{Model}

\vspace*{-3pt}
The choice of our model was inspired by practical considerations, and
we intend to apply it to a problem in computer vision: that of keypoint
matching as briefly discussed in \citet*{CollierDalalyan2012}.
Accordingly, it is necessary to justify the realism of model
(\ref{model}).

\vspace*{-3pt}
\subsubsection*{Variance}

\vspace*{-3pt}
Although the theoretical analysis of this paper is carried out for the
Gaussian sequence model, the procedure we propose admits a simple
counterpart in the regression model, at least in the case of
deterministic equidistant design. According to the theory on the
asymptotic equivalence, our results hold true for this model as well,
provided that $s>1/2$ (\cf \citet*{Rohde2004}). However, in the model
of regression, it is not realistic to assume that the variance of noise
is known in advance.

Nevertheless, one can compute a consistent estimator of the variance
(\cf \citet*{Rice1984}) and plug this estimator in the testing
procedure. In an analogous setup, it is proved in
\citet*{GayraudPouet2001} for example, that this plug-in strategy
preserves the rate-optimality of the testing procedure. We believe that
a similar result can be deduced in our set-up as well.

\subsubsection*{Symmetry of the model}

In our modelization, the two parts corresponding in the Gaussian white
noise model to two different functions are treated symmetrically: the
same model, with the same variance and the same noise, applies to both.
But, in applications, the signals that we want to match with each other
are thought to have the same nature. In addition, it seems that it is
not meaningful to consider the case when the regularities of the
Sobolev balls are different for the signals: under $\hzero$, the
regularity has to be the same.

Besides, one could want to normalize both equations to get the same
variance for both sides. But, this would also change the functions,
which would not only differ from each other by a shift, but also by a
dilatation. Therefore, the application of our methodology to this case
is not straightforward. However, a detailed inspection reveals that our
results carry over to the case when we replace $\s$ by
$\max(\s,\sdiese)$.

\subsection*{Weighted estimator}

In \citet*{CollierDalalyan2012}, another estimator of
$d^2(\bc,\bcdiese)$ is used, stemming from a penalization of the
log-likelihood ratio. This could be adapted in our context by
considering the test statistic
\begin{equation}
\lambda^{\bf w}_{\s} = \quad \frac{1}{4\s^2\sqrt{N_\s}} \min_\tau \Bigg[\sum_{j=1}^{\pinf} w_j \big| Y_j - e^{-\ii j\tau} \Ydiese_j \big|^2 \Bigg] - \|{\bf w}\|_2,
\end{equation}
where ${\bf w}=(w_1,w_2,\ldots)$ is a sequence of real numbers in
$[0,1]$ depending on $\s$. Under some conditions on ${\bf w}$, our
study would undergo only few modifications, and only the optimal
constants would be changed. For simplicity sake, we chose not to
consider the weighted estimator.


\subsection*{From classical signal detection to shift testing}

A first guess to try solving our problem could be to use an estimator
$\hat\tau$ of the shift and to apply the classical signal detection
methods to the sequence $(Y_j-e^{\ii j \hat\tau}\Ydiese_j)$. But this
approach fails, since it is not possible to get any consistent
estimator of the shift. Indeed, for example, the shift may not be
identifiable. Consequently, the study of the perturbative term (\cf
first heuristic after Theorem~\ref{theorem:upperbound}) is unavoidable,
in order to take into account every possible shift. We think that this
uncertainty entails a price, \ie a supplementary factor in the minimax
separation rate.

\subsection*{Future research}

Our model is only a simple version of the curve registration problem.
In further work, we could study what happens when the signals are
shifted and dilated by considering the pseudo-distance
\begin{equation*}
\tilde{d}^2(\bc,\bcdiese) = \inf_{\tau,a} \sum_{j=1}^\pinf |c_j-a\,e^{\ii j \tau}\cdiese_j|^2.
\end{equation*}
Once again, the problem is whether it is possible to estimate the dilatation parameter consistently.

\section{Proof of Theorem~\ref{theorem:upperbound}}\label{section:proofupperbound}

\subsection{First kind error}

Here, we prove that the asymptotic first kind error of the test
$\psi_\s$ does not exceed the prescribed level $\a$. To this end,
denote $\tau^*$ a real number such that, under $\hzero$, $\forall
j\geq1,\, \cdiese_j = e^{\ii j\tau^*} c_j$. We skip the dependence of
$\tau^*$ on $\bc$ and $\bcdiese$. Using the inequality
$$\min_\tau\somNsigma \big| Y_j - e^{-\ii j\tau} \Ydiese_j \big|^2 \leq
\somNsigma \big| Y_j - e^{-\ii j\tau^*} \Ydiese_j \big|^2 = \s^2
\somNsigma \big|\xi_j - e^{-\ii j\tau^*} \xidiese_j \big|^2,$$ we get
\begin{align*}
\a(\psi_\s,\thetazero) &= \sup_{\thetazero} \pccdiese\Big( \frac{1}{4\s^2\sqrt{N_\s}} \min_\tau\somNsigma \big| Y_j - e^{-\ii j\tau} \Ydiese_j \big|^2 - \sqrt{N_\s} > q_\a \Big) \\
                &\leq \prob\Big( \frac{1}{4\sqrt{N_\s}}\somNsigma \big(\eta_j^2 + \tilde{\eta}_j^2-4\big) > q_\a \Big), \\ &\text{ where } \eta_j= \Re(\xi_j - e^{-\ii j\tau^*} \xidiese_j), \tilde{\eta}_j= \Im(\xi_j - e^{-\ii j\tau^*} \xidiese_j) \simiid \calN(0,2).
\end{align*}
Finally, using Berry-Esseen's inequality (\cf Theorem~\ref{theorem:BE}), we get
\begin{equation*}
\a(\psi_\s,\thetazero) \leq \a + \frac{1}{\sqrt{2\pi N_\s}}\,,
\end{equation*}
and this gives the desired asymptotic level.

\subsection{Second kind error}

It remains to study the second kind error of the test, and to show that
it tends to $0$. Our proof is based on the heuristic given earlier in
Section~\ref{section:nonadaptiveupperbound}: we decompose $\l_\s(N_\s)$
into several terms, and make use of their respective orders of
magnitude. The decomposition gives
\begin{align}\label{decomposition}
& 4\s^2\sqrt{N_\s}\l_\s(N_\s) \nonumber \\ &\geq \min_\tau \Big\{ \somNsigma |c_j-e^{-\ii j\tau}\cdiese_j|^2 + 2 \s\somNsigma \Re\big((c_j-e^{-\ii j\tau}\cdiese_j)(\overline{\xi_j-e^{-\ii j\tau}\xidiese_j})\big)\Big\} \\ &- \s^2\sqrt{N_\s} \Big|\somNsigma \frac{|\xi_j|^2+|\xidiese_j|^2-4}{\sqrt{N_\s}}\Big| - 2\s^2 \max_\tau \Big| \somNsigma \Re \big(e^{\ii j\tau}\xi_j\overline{\xidiese_j}\big)\Big|. \nonumber
\end{align}
For simplicity sake, we introduce some notation:
\begin{equation*}
\begin{cases}
\ D_\s(\bc,\bcdiese) = \min_\tau \Big\{ \somNsigma |c_j-e^{-\ii j\tau}\cdiese_j|^2 \\[-3pt]
{} +2 \s\somNsigma \Re\big((c_j-e^{-\ii j\tau}\cdiese_j)(\overline{\xi_j-e^{-\ii j\tau}\xidiese_j})\big)\Big\},
\phantom{\Bigg()}\\[-6pt]
\ A_\s = \Big|\somNsigma \frac{|\xi_j|^2+|\xidiese_j|^2-4}{\sqrt{N_\s}}\Big|, \phantom{\Bigg()}\\[-6pt]
\ B_\s = \max_\tau \Big| \somNsigma \Re \big(e^{\ii j\tau}\xi_j\overline{\xidiese_j}\big)\Big|,\phantom{\Bigg()}
\end{cases}
\end{equation*}
which, combined with (\ref{decomposition}), leads to:
\begin{equation*}
\b(\psi_\s,\thetaun) \leq \sup_{\thetaun} \pccdiese\bigg( D_\s(\bc,\bcdiese) - \s^2\sqrt{N_\s} A_\s -2\s^2 B_\s \leq 4 q_\a \s^2 \sqrt{N_\s} \bigg).
\end{equation*}
In addition to $c_{s,L}$, introduced in the definition of $N_\s$, we will need the constant $c'$ and $\e$, defined as
\begin{equation*}
\begin{cases}
\ c' = \sqrt{\frac{256\,c_{s,L}}{4s+1}}, \\
\ \e = \frac{1}{2}\, \big(C^2-4L^2c_{s,L}^{-2s}-\sqrt{\frac{256\,c_{s,L}}{4s+1}}\big).
\end{cases}
\end{equation*}
Separating the different terms to study them independently, we write
\begin{align*}
\b(\psi_\s,\thetaun) &\leq \sup_{\thetaun} \pccdiese\bigg( D_\s(\bc,\bcdiese) \leq (c'+\e+\frac{4q_\a \sqrt{c_{s,L}}}{\sqrt{\log\s^{-1}}})\rhosigma^2\bigg) \\&+ \prob\bigg( \s^2\sqrt{N_\s} A_\s > \e\rhosigma^2\bigg) + \prob\bigg( 2\s^2 B_\s > c'\rhosigma^2\bigg).
\end{align*}
\begin{itemize}[align=left, leftmargin=*, noitemsep]
\item Let us first study $\sup_{\thetaun} \pccdiese\big( D_\s(\bc,\bcdiese)
\leq (c'+\e+\frac{4q_\a \sqrt{c_{s,L}}}{\sqrt{\log\s^{-1}}})\rhosigma^2\big)$,
which contains the dominant term when $\rhosigma$ is too large.

Denoting $\d=\sqrt{C^2-4L^2c_{s,L}^{-2s}}$, Lemma~\ref{lemma:1} allows
to apply Lemma~\ref{lemma:2} with $x_0=\d\rhosigma$ and
$M=(c'+\e+\frac{4q_\a \sqrt{c_{s,L}}}{\sqrt{\log\s^{-1}}})\rhosigma^2$.
The choice of the parameters yields for $\s$ small enough
\begin{equation*}
\big(\frac{\d}{4} - \frac{c'+\e}{4\d} - \frac{q_\a \sqrt{c_{s,L}}}{\d\sqrt{\log\s^{-1}}}\big) \rhosigma > 0\,,
\end{equation*}
so that the second part of Lemma~\ref{lemma:2} holds:
\begin{align*}
&\sup_{\thetaun} \pccdiese\bigg( D_\s(\bc,\bcdiese) \leq (c'+\e+\frac{4q_\a \sqrt{c_{s,L}}}{\sqrt{\log\s^{-1}}})\rhosigma^2\bigg)\\ &\leq 2 \bigg(1+\d^{-1}L\,\rhosigma^{-1}\max\{1,N_\s^{1-s}\}\bigg) \\ & \times \bigg[\exp\Big\{-\Big(\d^2 - c'-\e - \frac{4 q_\a \sqrt{c_{s,L}}}{\sqrt{\log\s^{-1}}}\Big)^2 \frac{\rhosigma^2}{32\d^2\s^2}\Big\}+\exp\Big\{-\frac{\rhosigma^2\d^2}{8\s^2}\Big\}\bigg] \\ &\overset{\s\to0}{\longrightarrow} 0, \text{ since } \rhosigma/\s\to0 \text{ as } \s\to0. \phantom{\bigg()}
\end{align*}
\item Let us now turn to $\prob\big( \s^2\sqrt{N_\s} A_\s > \e\rhosigma^2\big)$.
Prior to using Berry-Esseen's inequality (\cf Theorem~\ref{theorem:BE}), we derive $\frac{\e\rhosigma^2}{4\s^2\sqrt{N_\s}} \geq \frac{\e}{4\sqrt{c_{s,L}}}\sqrt{\log\s^{-1}}$, so that, putting $x=\frac{\e}{4\sqrt{c_{s,L}}}\sqrt{\log\s^{-1}}$ into the formula of the theorem and using the bound $1-\Phi(x)\leq\frac{e^{-\frac{x^2}{2}}}{x\sqrt{2\pi}}$ for every positive $x$,
\begin{equation*}
\prob\bigg( \s^2\sqrt{N_\s} A_\s > \e\rhosigma^2\bigg) \leq \sqrt{\frac{2}{\pi N_\s}} + \sqrt{\frac{32c_{s,L}}{\pi\e^2}}\frac{\s^\frac{\e^2}{32c}}{\sqrt{\log\s^{-1}}}\to0.
\end{equation*}
\item Finally, it remains to control $\prob\big( 2\s^2 B_\s > c'\rhosigma^2\big)$.
We apply Lemma~\ref{lemma:3}:
\begin{align*}
\prob\bigg( 2\s^2 B_\s > c'\rhosigma^2\bigg) &\leq 2c (\log\s^{-1})^\frac{-1}{4s+1} \s^{\frac{c'^2}{64c}-\frac{4}{4s+1}} + e^{-N_\s/2} \\
&\leq 2c (\log\s^{-1})^\frac{-1}{4s+1} + e^{-N_\s/2} \to 0.
\end{align*}
\end{itemize}

\allowdisplaybreaks

\section{Proof of Theorem~\ref{theorem:adaptiveupperbound}}\label{section:proofadaptiveupperbound}

\subsection{Proposition 1}

Let $\calN$ be a set of positive integers and denote $\tilde\psi_\s(q)
= \max_{N\in\calN} \psi_\s(N,q)$, where $\psi_\s$ is defined
in~\ref{definitionofthetest}.

\begin{itemize}
\item Concerning the first kind error:
\begin{align*}
\a(\tilde\psi_\s(q),\thetazero) &= \sup_{(\bc,\bcdiese)\in\thetazero} \pccdiese\Big( \max_{N\in\calN} \min_\tau \sum_{j=1}^N |Y_j-e^{-\ii j\tau}\Ydiese_j|^2 > q \Big) \\
&\leq \sum_{N\in\calN} \sup_{(\bc,\bcdiese)\in\thetazero} \pccdiese\Big( \min_\tau \sum_{j=1}^N |Y_j-e^{-\ii j\tau}\Ydiese_j|^2 > q \Big) \\
&= \sum_{N\in\calN} \a(\psi_\s(N,q),\thetazero).
\end{align*}
\item Concerning the second kind error:
\begin{align*}
\b(\tilde\psi_\s(q),\thetaun^{s,L}) &= \sup_{\thetaun^{s,L}} \pccdiese\Big( \max_{N\in\calN} \min_\tau \sum_{j=1}^N |Y_j-e^{-\ii j\tau}\Ydiese_j|^2 \leq q \Big) \\
&\leq \sup_{\thetaun^{s,L}} \pccdiese\Big( \min_\tau \sum_{j=1}^N |Y_j-e^{-\ii j\tau}\Ydiese_j|^2 \leq q \Big), \forall N\in\calN ,\\
&\leq \min_{N\in\calN} \b(\psi_\s(N,q),\thetaun^{s,L}).
\end{align*}
\end{itemize}

\subsection{First kind error}

Here, we prove that the first kind error of the test $\tilde{\psi}_\s$
converges to $0$. To this end, denote $\tau^*$ a real number such that,
under $\hzero$, $\forall j\geq1,\, \cdiese_j = e^{\ii j\tau^*} c_j$. We
skip the dependence of $\tau^*$ on $\bc$ and $\bcdiese$. Using the
inequality
$$\min_\tau\somNsigma \big| Y_j - e^{-\ii j\tau} \Ydiese_j
\big|^2 \leq \somNsigma \big| Y_j - e^{-\ii j\tau^*} \Ydiese_j \big|^2
= \s^2 \somNsigma \big|\xi_j - e^{-\ii j\tau^*} \xidiese_j \big|^2,$$
we get
\begin{align*}
&\a\big(\tilde{\psi}_\s,\thetazero\big) \leq \sum_{N\in\calN(s_1,s_2)} \prob\bigg( \frac{1}{4\sqrt{N}} \sum_{j=1}^N (\eta_j^2 + \tilde{\eta}_j^2 -4) > \sqrt{\,2\,\log\log\s^{-1}} \bigg), \\
&\text{ where } \eta_j= \Re(\xi_j - e^{-\ii j\tau^*} \xidiese_j), \tilde{\eta}_j= \Im(\xi_j - e^{-\ii j\tau^*} \xidiese_j) \simiid \calN(0,2).
\end{align*}
Thus, using Berry-Esseen's inequality (\cf Theorem~\ref{theorem:BE}
with $x=\sqrt{2\ \log\log\s^{-1}}$) and the bound
$1-\Phi(x)\leq\frac{e^{-\frac{x^2}{2}}}{x\sqrt{2\pi}}$ for every
positive $x$,
\begin{align*}
\a\big(\tilde{\psi}_\s,\thetazero\big) &\leq \sum_{N\in\calN(s_1,s_2)} \Big\{\frac{1}{\sqrt{2\pi N}} + \frac{\exp(-\log\log\s^{-1})}{\sqrt{4\pi\,\log\log\s^{-1}}}\Big\} \\
                        &\leq \frac{1}{\sqrt{2\pi}} \frac{\card\,\calN(s_1,s_2)}{\sqrt{N_\s(s_2)}} + \frac{1}{\sqrt{4\pi}}\frac{\card\,\calN(s_1,s_2)}{\log\s^{-1}\sqrt{\log\log\s^{-1}}}.
\end{align*}
As $\card\,\calN(s_1,s_2)=1+\big[\,(s_2-s_1)\log\s^{-1}\big]$ is of
logarithmic order, this implies that
${\a\big(\tilde{\psi}_\s,\thetazero\big)\to0}$.
\vfill

\subsection{Second kind error}

Finally, we study the second kind error and prove that it converges to $0$.

For $s\in[s_1,s_2]$, define $S= \max\big\{ t\in\S(s_1,s_2)
\;\suchthat\; t \leq s \big\}$, where we omit the dependence of $S$ in
$s$ for simplicity sake. Note that $0\leq s-S \leq
\frac{1}{\log\s^{-1}}$. $S$ is an approximation of $s$ which will be
sufficient for our purpose according to Lemma~\ref{lemma:6}.

We introduce the notation
\begin{equation*}
\begin{cases}
\ D^s_\s(\bc,\bcdiese) = \min_\tau \Big\{ \sum_{j=1}^{N_\s(s)} |c_j-e^{-\ii j\tau}\cdiese_j|^2 \\[-3pt]
{} + 2 \s\sum_{j=1}^{N_\s(s)} \Re\big((c_j-e^{-\ii j\tau}\cdiese_j)(\overline{\xi_j-e^{-\ii j\tau}\xidiese_j})
\big)\Big\}, \phantom{\Bigg()}\\[-6pt]
\ A^s_\s = \Big|\sum_{j=1}^{N_\s(s)} \frac{|\xi_j|^2+|\xidiese_j|^2-4}{\sqrt{N_\s(s)}}\Big|, \phantom{\Bigg()}\\[-6pt]
\ B^s_\s = \max_\tau \Big| \sum_{j=1}^{N_\s(s)} \Re \big(e^{\ii j\tau}\xi_j\overline{\xidiese_j}\big)\Big|.\phantom{\Bigg()}
\end{cases}
\end{equation*}
and computations similar to those of the previous section yield
\begin{align*}
&\sup_{[L_1,L_2]}\sup_{[s_1,s_2]} \b(\tilde{\psi}_\s,\thetaun^{s,L}) \\[-3pt]
&\leq \sup_{s,L} \sup_{\thetaun^{s,L}}
\pccdiese\Big( D_\s^S(\bc,\bcdiese) \leq \s^2\sqrt{\,32\,N_\s(S)\,\log\log\s^{-1}} +
\frac{C}{2}\rhosigma^2(S)\Big) \\[-3pt] &+ \sum_{s\in\S} \prob\Big( \s^2\sqrt{N_\s(s)} A_\s^s > \frac{C}{4}\rhosigma^2(s)\Big) + \sum_{s\in\S} \prob\Big( 2\s^2 B_\s^s > \frac{C}{4}\rhosigma^2(s)\Big).
\end{align*}

\begin{itemize}[align=left, leftmargin=*, noitemsep]
\item Let us study $\sup_{s,L} \sup_{\thetaun^{s,L}} \pccdiese\big(
D_\s^S(\bc,\bcdiese) \leq \s^2\sqrt{\,32\,N_\s(S)\,\log\log\s^{-1}} + \frac{C}{2}\rhosigma^2(S)\big)$.

Lemma~\ref{lemma:6} implies $$\big(N_\s(S)+1\big)^{-2s} \leq
\rhosigma^*(S)^2 \leq e^\frac{8}{(4s_1+1)^2} \rhosigma^*(s)^2,$$ so
that, denoting $\d^2=C^2-4L^2e^\frac{8}{(4s_1+1)^2}$,
Lemma~\ref{lemma:1} allows to apply Lemma~\ref{lemma:2} with
$x_0=\d\rhosigma^*(s)$ and $M=\s^2\sqrt{32\,N_\s(S)\log\log\s^{-1}} +
\frac{C}{2}\rhosigma^2(s)$. On the other hand, the choice of $\d$
entails that for $C$ large and $\s$ small enough
$\phantom{e^\frac{8}{(4s_1+1)^2}}$
\begin{equation*}
\forall\, s\in[s_1,s_2], \quad \big(\frac{\d}{4} - \frac{C}{8\d}\big) \rhosigma^*(s) - \frac{\s^2 \sqrt{2\,N_\s(S)\log\log\s^{-1}}}{\d\rhosigma^*(s)} > 0.
\end{equation*}
Hence, applying the second part of Lemma~\ref{lemma:5}, we get an
inequality where the right-hand side converges to $0$ as $\s$ tends to
$0$:
\begin{align*}
&\sup_s \sup_{\thetaun^{s,L}} \pccdiese\Big( D_\s^S(\bc,\bcdiese) \leq \s^2\sqrt{\,32\,N_\s(S)\,\log\log\s^{-1}} + \frac{C}{2}   \rhosigma^2(S)\Big) \\ &\leq 2 \bigg(1+\d^{-1}L\,\rhosigma(s_2)^{-1}\max\{1,N_\s(s_1)^{1-s_1}\}\bigg) \\ &\times\bigg[\exp\Big\{-\Big( (\d^2 - \frac{C}{2}) \rhosigma^2(s_1) - \sqrt{32\,N_\s(s_1)\log\log\s^{-1}} \Big)^2/32\d^2\rhosigma^2(s_1)\s^2\Big\} \\ &+\exp\Big\{-\frac{\rhosigma^2(s_2)\d^2}{8\s^2}\Big\}\bigg].
\end{align*}
\item Consider the second term. Berry-Esseen's theorem (\cf Theorem~\ref{theorem:BE}) implies the following inequality, where the right-hand side converges to $0$ as $\s$ tends to $0$:
\begin{align*}
&\sum_{s\in\S} \prob\Big( \s^2\sqrt{N_\s(s)} A_\s^s > \frac{C}{4}\rhosigma^2(s)\Big) \\ &\leq \card\,\calN(s_1,s_2)\cdot\bigg[ \sqrt{\frac{2}{\pi N_\s(s_2)}} + \sqrt{\frac{128}{\pi C}} \frac{\s^{\frac{C}{128}}}{\sqrt{\log\s^{-1}}} \bigg].
\end{align*}
\item Let us turn to the third term. We apply Lemma~\ref{lemma:3} and get an inequality where once again the right-hand side converges to $0$ as $\s$ tends to $0$:
\begin{align*}
&\sum_{s\in\S} \prob\Big( 2\s^2 B_\s^s > \frac{C}{4}\rhosigma^2(s)\Big) \\ &\leq \card\,\calN(s_1,s_2) \cdot\Big[ \,2 (\log\s^{-1})^\frac{-1}{4s_2+1} \s^{\frac{C^2}{1024}-\frac{4}{4s_1+1}} + e^{-N_\s/2}\Big].
\end{align*}
\end{itemize}

\section{Proof of Theorem~\ref{theorem:lowerbound}}\label{section:prooflowerbound}

Consider a randomized test $\psi$ in the shifted curve model. We will
define a corresponding test in the classical model with smaller first
and second kind errors, and it is sufficient to establish the result.

First note that there is a measurable function $f$ with respect to the
$\s$-algebra engendered by the sequences $\bY$ and $\bYdiese$ and with
values in $[0,1]$ such that $\psi = f(\bY,\bYdiese)$. Denoting
$\bepsilon$ a sequence of i.i.d random variables $\calN(0,\s^2)$
independent from $\bY$, we define $\psi^{\bf class} =
\esp_{\bepsilon}\big(f(\bY,\bepsilon)|\bY\big)$, where
$\esp_{\bepsilon}$ is the integration with respect to the probability
engendered by $\bepsilon$. $\psi^{\bf class}$ is $\s(\bY)$-measurable
and thus constitutes a test for the classical model.

This testing procedure can be interpreted as a test in the shifted
curve model when $\bcdiese=0$. Indeed, $d(\bc,\bcdiese) = \|\bc\|_2$
when $\bcdiese=0$, so that $\thetazero^{\bf class}\times 0 \subseteq
\thetazero$ and $\thetaun^{\bf class} \times 0 \subseteq \thetaun$. By
Tonelli-Fubini's theorem, $\psi^{\bf class}$ satisfies
\begin{align*}
\a^{\bf class}(\psi^{\bf class},\thetazero^{\bf class}) &= \sup_{\thetazero^{\bf class}} \esp_{\bc}\big( \psi^{\bf class} \big) \\
             &= \sup_{\thetazero^{\bf class}} \esp_{\bc,0} \big( f(\bY,\bYdiese) \big) \\
             &\leq \a(\psi,\thetazero).
\end{align*}
A similar inequality holds concerning the second kind error.

\section{Lemmas}\label{section:lemmas}

\begin{lemma}\label{lemma:1}
Let $\bc=(c_1,c_2,\ldots)$ and
$\tilde{\bc}=(\tilde{c}_1,\tilde{c}_2,\ldots)$ in $\FsL$, with $s>0$,
be such that $d(\bc,\tilde{\bc})\geq C\rho$, and let $N+1 \geq
c\rho^{-1/s}$. Then
\begin{equation*}
\min_\tau \sum_{j=1}^N |c_j-e^{-\ii j\tau}\tilde{c}_j|^2 \geq (C^2 - 4L^2c^{-2s}) \rho^2.
\end{equation*}
\end{lemma}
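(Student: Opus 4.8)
The plan is to exploit that the truncated sum $\sum_{j=1}^N |c_j - e^{-\ii j\tau}\tilde{c}_j|^2$ differs from the full pseudo-distance $\sum_{j=1}^\infty |c_j - e^{-\ii j\tau}\tilde{c}_j|^2$ only by a tail, and that this tail is uniformly small in $\tau$ precisely because $\bc$ and $\tilde{\bc}$ are smooth and $N$ is large. First I would fix a minimizer $\tau_0$ of the continuous, $2\pi$-periodic map $\tau \mapsto \sum_{j=1}^N |c_j - e^{-\ii j\tau}\tilde{c}_j|^2$ on the compact interval $[0,2\pi]$, so that the left-hand side of the claim equals $\sum_{j=1}^N |c_j - e^{-\ii j\tau_0}\tilde{c}_j|^2$, and then write
\begin{equation*}
\sum_{j=1}^N |c_j - e^{-\ii j\tau_0}\tilde{c}_j|^2 = \sum_{j=1}^\infty |c_j - e^{-\ii j\tau_0}\tilde{c}_j|^2 - \sum_{j=N+1}^\infty |c_j - e^{-\ii j\tau_0}\tilde{c}_j|^2.
\end{equation*}

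The first term on the right is bounded below, irrespective of the value $\tau_0$, by $\inf_\tau \sum_{j=1}^\infty |c_j - e^{-\ii j\tau}\tilde{c}_j|^2 = d^2(\bc,\tilde{\bc}) \geq C^2\rho^2$, directly from the hypothesis. The only genuine computation is the upper bound on the tail, which I would establish uniformly in $\tau$. Since $|e^{-\ii j\tau}| = 1$, the elementary inequality $|a-b|^2 \leq 2|a|^2 + 2|b|^2$ gives $|c_j - e^{-\ii j\tau}\tilde{c}_j|^2 \leq 2|c_j|^2 + 2|\tilde{c}_j|^2$. Summing over $j \geq N+1$ and using membership in $\FsL$, I would estimate
\begin{equation*}
\sum_{j=N+1}^\infty |c_j|^2 = \sum_{j=N+1}^\infty j^{-2s}\big(j^{2s}|c_j|^2\big) \leq (N+1)^{-2s}\sum_{j=N+1}^\infty j^{2s}|c_j|^2 \leq (N+1)^{-2s}L^2,
\end{equation*}
and likewise for $\tilde{\bc}$, so that the tail is at most $4L^2(N+1)^{-2s}$. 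Invoking the assumption $N+1 \geq c\rho^{-1/s}$, and monotonicity of $x \mapsto x^{-2s}$, yields $(N+1)^{-2s} \leq c^{-2s}\rho^2$, whence the tail is bounded by $4L^2 c^{-2s}\rho^2$.

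Combining the two estimates at $\tau_0$ gives $\sum_{j=1}^N |c_j - e^{-\ii j\tau_0}\tilde{c}_j|^2 \geq C^2\rho^2 - 4L^2 c^{-2s}\rho^2 = (C^2 - 4L^2 c^{-2s})\rho^2$, which is exactly the asserted inequality. The one point meriting care is that the lower bound on the infinite sum and the upper bound on the tail are evaluated at the \emph{same} $\tau_0$; this raises no difficulty, since the tail estimate holds for every $\tau$, and therefore no matching of the minimizer of the truncated problem with that of the full pseudo-distance is required.
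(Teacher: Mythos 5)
Your proof is correct and follows essentially the same route as the paper's: decompose the truncated sum as the full sum minus the tail, bound the full sum below by $d^2(\bc,\tilde{\bc})\geq C^2\rho^2$, and bound the tail uniformly in $\tau$ by $4L^2(N+1)^{-2s}\leq 4L^2c^{-2s}\rho^2$ using membership in $\FsL$. Your explicit remark that the tail bound holds for every $\tau$, so no matching of minimizers is needed, is a point the paper leaves implicit but is exactly the right justification.
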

\begin{proof}[Proof of Lemma~\ref{lemma:1}]
Since both $\bc$ and $\tilde{\bc}$ belong to the Sobolev ball, it holds that
\begin{align*}
\sum_{j>N} |c_j-e^{-ij\tau}\tilde c_j|^2 &\le
\sum_{j>N} \big(2|c_j|^2+2|\tilde c_j|^2\big) \\
&\le  2(N+1)^{-2s}\sum_{j>N} j^{2s}\big(|c_j|^2+|\tilde c_j|^2\big)\\
&\le  4L^2(N+1)^{-2s}.
\end{align*}
Consequently, taking into account that $\sum_{j=1}^\infty
|c_j-e^{-ij\tau}\tilde c_j|^2\ge d^2(c,\tilde c)\ge C^2\rho^2$, we get
\begin{align*}
\sum_{j=1}^N |c_j-e^{-ij\tau}\tilde c_j|^2 &=\sum_{j=1}^\infty |c_j-e^{-ij\tau}\tilde c_j|^2-\sum_{j>N}|c_j-e^{-ij\tau}\tilde c_j|^2\\
&\ge C^2\rho^2-4L^2(N+1)^{-2s},
\end{align*}
and the result follows in view of $N+1\ge c\rho^{-1/s}$.
\end{proof}

\begin{lemma}\label{lemma:2}
Let $N$ be some positive integer, let $\xi_j$, $\tilde\xi_j$,
$j=1,\ldots,N$ be independent complex valued random variables such that
their real and imaginary parts are independent standard Gaussian
variables, and let $\bc=(c_1,\ldots,c_N)$,
$\tilde{\bc}=(\tilde{c}_1,\ldots,\tilde{c}_N)$ be complex vectors.
Denote $\bxi=(\xi_1,\ldots,\xi_N)$,
$\btildexi=(\tilde{\xi}_1,\ldots,\tilde{\xi}_N)$ and
\begin{align*}
\begin{cases}
\ D_{\s,N}(\bc,\tilde{\bc}) = \min_\tau \Big\{ \sum_{j=1}^N |c_j-e^{-\ii j\tau}\tilde{c}_j|^2 \\ \ + 2 \s\sum_{j=1}^N \Re\big((c_j-e^{-\ii j\tau}\tilde{c}_j)(\overline{\xi_j-e^{-\ii j\tau}\tilde\xi_j})\big)\Big\}, \\
\ d_{N,\tau}(\bc,\tilde{\bc}) = \sqrt{\sum_{j=1}^N \big|c_j-e^{-\ii j\tau}\tilde{c}_j\big|^2}, \\
\ u_N(\bxi,\bc,\tilde{\bc}) = \sup_\tau \Big|\sum_{j=1}^N \frac{\Re \big[\xi_j(\overline{c_j-e^{-\ii j\tau}\tilde{c}_j})\big]}{d_{N,\tau}(\bc,\tilde{\bc})}\Big|.
\end{cases}
\end{align*}
Assume that $x_0\leq \min_\tau d_{N,\tau}(\bc,\tilde{\bc})$, then
\begin{align*}
 \forall M\in\RR, \quad &\prob\bigg( D_{\s,N}(\bc,\tilde{\bc})\leq M \bigg) \\ &\leq 2 \,\prob\bigg( \s u_N(\bxi,\bc,\tilde{\bc}) \geq \frac{x_0}{4} - \frac{M}{4x_0} \bigg) + 2 \,\prob\bigg( \frac{x_0}{2} < \s u_N(\bxi,\bc,\tilde{\bc}) \bigg).
\end{align*}
Assume further that $\bc$ and $\tilde{\bc}$ are in $\FsL$ and that
$\frac{x_0}{4} - \frac{M}{4x_0}>0$, then combining the last result with
Lemma~\ref{lemma:5},
\begin{align*}
\prob\bigg( D_{\s,N}(\bc,\tilde{\bc})\leq M \bigg) \leq & \ 2 \Big(1+x_0^{-1}L\, \max\{1,N^{1-s}\}\Big) \\ &\times \Big(\exp\big\{-(x_0^2-M)^2/32x_0^2\s^2\big\}+\exp\big\{-x_0^2/8\s^2\big\}\Big).
\end{align*}
\end{lemma}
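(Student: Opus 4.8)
The plan is to bound $D_{\s,N}(\bc,\tilde{\bc})$ from below by separating its deterministic quadratic part $d_{N,\tau}^2(\bc,\tilde{\bc})$ from the stochastic cross term, and to control the latter through the normalized suprema $u_N$. Set $a_j(\tau)=c_j-e^{-\ii j\tau}\tilde{c}_j$ and $\tilde{a}_j(\tau)=\tilde{c}_j-e^{\ii j\tau}c_j=-e^{\ii j\tau}a_j(\tau)$, so that $|\tilde{a}_j(\tau)|=|a_j(\tau)|$. Expanding $\overline{\xi_j-e^{-\ii j\tau}\tilde{\xi}_j}$ and using $\Re(z)=\Re(\bar z)$ yields the splitting of the cross term
\begin{equation*}
\sum_{j=1}^N \Re\big(a_j(\tau)\,\overline{\xi_j-e^{-\ii j\tau}\tilde{\xi}_j}\big)=P_\tau+Q_\tau,\quad P_\tau=\sum_{j=1}^N\Re\big(\xi_j\overline{a_j(\tau)}\big),\quad Q_\tau=\sum_{j=1}^N\Re\big(\tilde{\xi}_j\overline{\tilde{a}_j(\tau)}\big).
\end{equation*}
By the very definition of $u_N$ one has $|P_\tau|\le d_{N,\tau}(\bc,\tilde{\bc})\,u_N(\bxi,\bc,\tilde{\bc})$; since $Q_\tau$ is $P_\tau$ with $(\bc,\bxi)$ and $(\tilde{\bc},\btildexi)$ interchanged, the change of variable $\tau\mapsto-\tau$ gives $|Q_\tau|\le d_{N,\tau}(\bc,\tilde{\bc})\,u_N(\btildexi,\tilde{\bc},\bc)$, a quantity which obeys the same Lemma~\ref{lemma:5} tail bound as $u_N(\bxi,\bc,\tilde{\bc})$, since that bound depends on the data only through the symmetric quantities $L$, $x_0$, $N$ and $s$.

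On $\{D_{\s,N}(\bc,\tilde{\bc})\le M\}$ I would pick $\tau^*$ with $d_{N,\tau^*}^2+2\s(P_{\tau^*}+Q_{\tau^*})\le M$. Writing $d^*=d_{N,\tau^*}(\bc,\tilde{\bc})\ge x_0$ and using $|P_{\tau^*}|+|Q_{\tau^*}|\ge-(P_{\tau^*}+Q_{\tau^*})$ together with the two bounds above, I obtain
\begin{equation*}
\s\,\big(u_N(\bxi,\bc,\tilde{\bc})+u_N(\btildexi,\tilde{\bc},\bc)\big)\ \ge\ \frac{(d^*)^2-M}{2d^*}\ =\ 2\,\phi(d^*),\qquad \phi(d)\triangleq\frac{d}{4}-\frac{M}{4d}.
\end{equation*}
The next step is to minimize $\phi$ on $[x_0,\infty)$. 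As $\phi'(d)=\tfrac14(1+M/d^2)$, the map is nondecreasing there whenever $M\ge-x_0^2$, so $\phi(d^*)\ge\phi(x_0)=\tfrac{x_0}{4}-\tfrac{M}{4x_0}$; in the remaining case $M<-x_0^2$ the minimum is reached at $\sqrt{-M}>x_0$ and equals $\tfrac12\sqrt{-M}>\tfrac{x_0}{2}$, whence $\phi(d^*)>x_0/2$. Consequently, on $\{D_{\s,N}\le M\}$ the sum $\s\big(u_N(\bxi,\bc,\tilde{\bc})+u_N(\btildexi,\tilde{\bc},\bc)\big)$ exceeds either $2\phi(x_0)$ or $x_0$, so at least one of the two suprema exceeds $\phi(x_0)$, or at least one exceeds $x_0/2$. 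A union bound, combined with the fact that both suprema share the same tail, delivers the first inequality, the factor $2$ accounting for the two noise terms.

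For the second assertion I would simply feed the tail estimate of Lemma~\ref{lemma:5}, $\prob\big(\s u_N\ge t\big)\le\big(1+x_0^{-1}L\max\{1,N^{1-s}\}\big)\exp\{-t^2/2\s^2\}$ (applicable since $\bc,\tilde{\bc}\in\FsL$ and $x_0\le\min_\tau d_{N,\tau}$), into the first inequality. The hypothesis $\tfrac{x_0}{4}-\tfrac{M}{4x_0}>0$ makes the choice $t=\tfrac{x_0}{4}-\tfrac{M}{4x_0}$ admissible and produces the exponent $(x_0^2-M)^2/32x_0^2\s^2$, while $t=x_0/2$ produces $x_0^2/8\s^2$; factoring out the common prefactor gives exactly the stated bound.

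I expect the stochastic cross term to be the main obstacle. Two points require care: the non-monotonicity of $\phi$, which is precisely what forces the two-threshold case analysis and hence the second probability $\prob(x_0/2<\s u_N)$; and the treatment of the $\tilde{\xi}$-contribution $Q_\tau$, whose covariance structure in $\tau$ is \emph{not} the same as that of $P_\tau$, so that one cannot claim equality in distribution of the two suprema and must instead rely on their common Lemma~\ref{lemma:5} bound in order to write both terms with the single symbol $u_N$.
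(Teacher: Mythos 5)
Your proof is correct and follows essentially the same route as the paper: the same splitting of the stochastic cross term into the two normalized Gaussian suprema $u_N(\bxi,\bc,\tilde{\bc})$ and $u_N(\btildexi,\tilde{\bc},\bc)$, the same two-threshold argument (the paper phrases it as minimizing $x^2-ax$ over $x\ge x_0$ and splitting on whether the minimum is attained at $x_0$, you as minimizing $\phi(d)=d/4-M/(4d)$, which is equivalent), and the same final appeal to Lemma~\ref{lemma:5}. Your closing remark is in fact a small refinement of the paper's argument: the paper obtains the factor $2$ by asserting that the two suprema ``have the same distribution'', which is not obvious since the two Gaussian processes have different covariance structures in $\tau$, whereas your reliance on the common, $\bc\leftrightarrow\tilde{\bc}$-symmetric tail bound of Lemma~\ref{lemma:5} is what actually justifies the second (and only used) inequality of the lemma.
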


\begin{proof}[Proof of Lemma~\ref{lemma:2}]
\begin{align*}
&\sum_{j=1}^N \big|c_j-e^{-\ii j\tau}\tilde{c}_j\big|^2 + 2\s \sum_{j=1}^N \Re \Big((c_j-e^{-\ii j\tau}\tilde{c}_j)(\overline{\xi_j-e^{-\ii j\tau}\tilde\xi_j})\Big) \phantom{\Bigg()} \\
&= d^2_{N,\tau}(\bc,\tilde{\bc}) + 2\s d_{N,\tau}(\bc,\tilde{\bc}) \sum_{j=1}^N \frac{\Re \big[\xi_j(\overline{c_j-e^{-\ii j\tau}\tilde{c}_j})\big]}{d_{N,\tau}(\bc,\tilde{\bc})} \\ &+ 2\s d_{N,\tau}(\bc,\tilde{\bc}) \sum_{j=1}^N \frac{\Re \big[\tilde\xi_j(\overline{e^{\ii j\tau}c_j-\tilde{c}_j})\big]}{d_{N,\tau}(\bc,\tilde{\bc})} \\
&\geq d^2_{N,\tau}(\bc,\tilde{\bc}) - 2\s d_{N,\tau}(\bc,\tilde{\bc}) \sup_\tau \Big|\sum_{j=1}^N \frac{\Re \big[\xi_j(\overline{c_j-e^{-\ii j\tau}\tilde{c}_j})\big]}{d_{N,\tau}(\bc,\tilde{\bc})}\Big| \\ &- 2\s d_{N,\tau}(\bc,\tilde{\bc}) \sup_\tau \Big| \sum_{j=1}^N \frac{\Re \big[\tilde\xi_j(\overline{e^{\ii j\tau}c_j-\tilde{c}_j})\big]}{d_{N,\tau}(\bc,\tilde{\bc})} \Big|. \phantom{\Bigg()}
\end{align*}
With the notation $u_N(\bxi,\bc,\tilde{\bc}) = \sup_\tau \Big|\sum_{j=1}^N \frac{\Re \big[\xi_j(\overline{c_j-e^{-\ii j\tau}\tilde{c}_j})\big]}{d_{N,\tau}(\bc,\tilde{\bc})}\Big|$, we obtain
\begin{equation*}
D_{\s,N}(\bc,\tilde{\bc}) \geq \min_{x\geq x_0}(x^2-ax),
\end{equation*}
with $a=2\s u_N(\bxi,\bc,\tilde{\bc}) + 2\s
u_N(\tilde{\bxi},\tilde{\bc},\bc)$. Now, using the fact that
$\min_{x\geq x_0}(x^2-ax)$ is reached at the point $x_0$ if
$x_0\geq\frac{a}{2}$, we get
\begin{align*}
\prob\bigg( D_{\s,N}(\bc,\tilde{\bc})\leq M \bigg) &\leq \prob\bigg( x_0^2 - 2 x_0 \s u_N(\bxi,\bc,\tilde{\bc}) - 2 x_0 \s u_N(\tilde{\bxi},\tilde{\bc},\bc) \leq M \bigg) \\
&+ \prob\bigg( x_0 < \s u_N(\bxi,\bc,\tilde{\bc}) + \s u_N(\tilde{\bxi},\tilde{\bc},\bc)) \bigg)\\
&\leq 2 \,\prob\bigg( \s u_N(\bxi,\bc,\tilde{\bc}) \geq \frac{x_0}{4} - \frac{M}{4x_0} \bigg)\\ &+ 2 \,\prob\bigg( \frac{x_0}{2} < \s u_N(\bxi,\bc,\tilde{\bc})\bigg),
\end{align*}
since $u_N(\bxi,\bc,\tilde{\bc})$ and $u_N(\tilde{\bxi},\tilde{\bc},\bc)$ have the same distribution.
\end{proof}

\begin{lemma}\label{lemma:3}
Let $\xi_j,\tilde{\xi}_j$ be independent complex valued random
variables such that their real and imaginary parts are independent
standard Gaussian variables, let $c$, $s$ and $\s$ be some positive
real numbers. Denote
\begin{equation*}
\begin{cases}
\ \rhosigma=(\s^2\sqrt{\log\s^{-1}})^\frac{2s}{4s+1}, \\
\ N_\s=[c\rhosigma^{-1/s}], \\
\ B_\s=\max_\tau \Big| \somNsigma \Re \big(e^{\ii j\tau}\xi_j\tilde{\xi}_j\big)\Big|.
\end{cases}
\end{equation*}
Then, for $\s$ small enough and for every positive $c'$,
\begin{equation*}
\prob\bigg( 2\s^2 B_\s > c'\rhosigma^2\bigg) \leq 2c (\log\s^{-1})^\frac{-1}{4s+1} \s^{\frac{c'^2}{64c}-\frac{4}{4s+1}}+ e^{-N_\s/2}.
\end{equation*}
\end{lemma}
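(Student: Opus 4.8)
The plan is to view $B_\s=\max_\tau|g(\tau)|$, where $g(\tau)=\somNsigma\Re\big(e^{\ii j\tau}\xi_j\tilde\xi_j\big)$, as the supremum norm of a real trigonometric polynomial of degree $N_\s$ in the shift variable $\tau$, and to reduce this supremum to a maximum over finitely many nodes. Since $g$ is a trigonometric polynomial of degree $N_\s$, Bernstein's inequality for trigonometric polynomials gives the deterministic bound $\|g'\|_\infty\le N_\s\|g\|_\infty$, which lets me control the oscillation of $g$ between consecutive nodes of a regular grid of order $N_\s$ points and conclude that $B_\s\le 2\max_k|g(\tau_k)|$. Thus it suffices to bound, for each fixed node, the pointwise tail $\prob\big(|g(\tau_k)|>t/2\big)$ with $t=\frac{c'\rhosigma^2}{2\s^2}$, and then to take a union bound over the $O(N_\s)$ nodes.

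For the pointwise tail I would exploit that, for each fixed $\tau$, $g(\tau)$ is a sum of $N_\s$ independent, centered terms $\Re\big(e^{\ii j\tau}\xi_j\tilde\xi_j\big)$, each a product of Gaussians with variance $2$, so the total variance is $2N_\s$. Conditionally on $\tilde\xi$, $g(\tau)$ is in fact exactly $\mathcal N\big(0,\somNsigma|\tilde\xi_j|^2\big)$, whose variance concentrates at its mean $2N_\s$; equivalently one applies Bernstein's inequality for sums of independent sub-exponential variables. Either way, in the Gaussian regime --- valid here because $t/N_\s\to0$ --- the node tail is at most $2\exp\!\big(-\tfrac{(t/2)^2}{2\cdot 2N_\s}\big)=2\exp\!\big(-\tfrac{t^2}{16N_\s}\big)$. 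The decisive computation is the algebraic identity $\rhosigma^{(4s+1)/s}=\s^4\log\s^{-1}$, which together with $N_\s\approx c\,\rhosigma^{-1/s}$ gives
\begin{equation*}
\frac{t^2}{16N_\s}=\frac{(c')^2}{64c}\cdot\frac{\rhosigma^{(4s+1)/s}}{\s^4}=\frac{(c')^2}{64c}\,\log\s^{-1},
\end{equation*}
so that each node contributes at most $\s^{(c')^2/(64c)}$.

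The union bound over the nodes then produces the prefactor $N_\s\approx c\,\s^{-4/(4s+1)}(\log\s^{-1})^{-1/(4s+1)}$, yielding the announced first term $2c\,(\log\s^{-1})^{-1/(4s+1)}\s^{(c')^2/(64c)-4/(4s+1)}$; the residual term $e^{-N_\s/2}$ is the price of an auxiliary $\chi^2$-type event, used to replace the random conditional variance $\somNsigma|\tilde\xi_j|^2$ by its mean $2N_\s$ (or, equivalently, to discard the sub-exponential linear-tail regime, which carries only exponentially small probability). I expect the main obstacle to be the passage from a single shift to the supremum over all $\tau$ without degrading the constant $(c')^2/(64c)$: the threshold $t$ is itself of the critical order $\sqrt{N_\s\log\s^{-1}}$, exactly the typical size of $B_\s$, so the discretization must be fine enough that the between-node oscillation is negligible against $t$, yet coarse enough that the union-bound multiplicity stays of order $N_\s$. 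Reconciling these two requirements is precisely what Bernstein's trigonometric inequality achieves, and verifying that one indeed sits in the Gaussian rather than the heavier sub-exponential tail regime at the scale $t$ is the step that must be checked with care.
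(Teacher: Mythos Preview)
Your approach is correct and is essentially the same as the paper's: the paper invokes Lemma~\ref{lemma:4} (a sup-norm concentration inequality for $\sum_j u_j\Re(e^{\ii j\tau}\xi_j\tilde\xi_j)$, proved in Collier--Dalalyan 2012) as a black box with $u_j\equiv1$, $y=\sqrt{N_\s}$ and $x$ rescaled by $\sqrt{2\log\s^{-1}}$, and then performs exactly the algebraic substitution you describe to extract the exponent $c'^2/(64c)$. Your discretization via Bernstein's trigonometric inequality, conditioning on $\tilde\xi$ to obtain Gaussian pointwise tails, and use of a $\chi^2$-type event to control the conditional variance is precisely the strategy that underlies Lemma~\ref{lemma:4}, so you have in effect re-derived that lemma in the special case needed here.
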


\begin{proof}[Proof of Lemma~\ref{lemma:3}]
Applying Lemma~\ref{lemma:4}, we state that, for $\s$ small enough,
\begin{equation*}
\prob\Big(B_\s > 4 x \sqrt{N_\s\log(\s^{-1})} \Big) \leq 2c (\log\s^{-1})^\frac{-1}{4s+1} \s^{x^2-\frac{4}{4s+1}} + e^{-N_\s/2},
\end{equation*}
from which follows that
\begin{equation*}
\prob\Big(B_\s > 4 x \rhosigma^{-1/2s}\sqrt{c\log(\s^{-1})} \Big) \leq 2c (\log\s^{-1})^\frac{-1}{4s+1} \s^{x^2-\frac{4}{4s+1}} + e^{-N_\s/2}.
\end{equation*}
We conclude, observing that $4 x \rhosigma^{-1/2s}\sqrt{c\log(\s^{-1})}
= \frac{8x\rhosigma^2\sqrt{c}}{2\s^2}$.
\end{proof}

\begin{lemma}\label{lemma:4}
Let $N$ be some positive integer and let $\xi_j$, $\tilde{\xi}_j$,
$j=1,\ldots,N$, be independent complex valued random variables such
that their real and imaginary parts are independent standard Gaussian
variables. Let $\bu=(u_1,\ldots,u_N)$ be a vector of real numbers.
Denote $S(t) = \sum_{j=1}^{N} u_j \Re\big(e^{\ii
jt}\xi_j\tilde{\xi}_j\big)$ for every $t$ in $[0,2\pi]$ and
$\|S\|_\infty= \sup_{t\in[0,2\pi]} |S(t)|$. Then
\begin{equation*}
\forall x,y>0, \quad \prob\Big(\|S\|_\infty> \sqrt{2} x\big(\|\bu\|_2+y\|\bu\|_\infty\big) \Big) \leq (N+1)e^{-x^2/2}+e^{-y^2/2}.
\end{equation*}
\end{lemma}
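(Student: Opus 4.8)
The plan is to condition on $\btildexi=(\tilde\xi_1,\dots,\tilde\xi_N)$ and exploit that, conditionally, $S$ is a very structured Gaussian process. Writing $\tilde\xi_j=\tilde\xi_j^{(1)}+\ii\tilde\xi_j^{(2)}$ and similarly for $\xi_j$, one has $\Re(e^{\ii jt}\xi_j\tilde\xi_j)=\xi_j^{(1)}u_j^{(1)}(t)+\xi_j^{(2)}u_j^{(2)}(t)$ for explicit coefficients that are linear in $(\tilde\xi_j^{(1)},\tilde\xi_j^{(2)})$ and in $(\cos jt,\sin jt)$. Hence, conditionally on $\btildexi$, $S(t)$ is a centered Gaussian variable for each $t$, and a direct computation collapses the $\cos^2+\sin^2$ terms to give the $t$-independent variance
\begin{equation*}
\var\big(S(t)\mid\btildexi\big)=\sum_{j=1}^N u_j^2|\tilde\xi_j|^2=:V,
\end{equation*}
while the cross terms vanish and the covariance reduces to $\esp[S(t)S(s)\mid\btildexi]=\sum_{j=1}^N u_j^2|\tilde\xi_j|^2\cos\big(j(t-s)\big)$. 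Thus, conditionally, $S$ is a \emph{stationary} centered Gaussian trigonometric polynomial of degree $N$ with constant variance $V$ and zero mean over the circle. This reduction is the engine of the whole argument.

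Granting this, I would separate the random scale $V$ from the fluctuation. First I would prove the conditional maximal inequality
\begin{equation*}
\prob\Big(\sup_t|S(t)|>x\sqrt V\ \Big|\ \btildexi\Big)\le(N+1)\,e^{-x^2/2}.
\end{equation*}
Since $S$ has zero mean on the circle, $\{\sup_t S(t)>z\}$ forces an upcrossing of the positive level $z$, so a level-crossing (Rice) argument bounds the probability by the expected number of upcrossings, namely $\sqrt{-\Gamma''(0)/\Gamma(0)}\,e^{-z^2/(2V)}$ with $-\Gamma''(0)/\Gamma(0)=\big(\sum_j j^2u_j^2|\tilde\xi_j|^2\big)/\big(\sum_j u_j^2|\tilde\xi_j|^2\big)\le N^2$; adding the two signs together with the pointwise term at a base point yields the claim at $z=x\sqrt V$. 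An equivalent route is a union bound over an $O(N)$-point grid combined with the Bernstein inequality $\|S'\|_\infty\le N\|S\|_\infty$ for degree-$N$ trigonometric polynomials.

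Next I would control the scale itself. The map $\big(\tilde\xi_j^{(1)},\tilde\xi_j^{(2)}\big)_j\mapsto\sqrt V$ is Lipschitz with constant $\|\bu\|_\infty$, and $\esp\sqrt V\le\sqrt{\esp V}=\sqrt2\,\|\bu\|_2$ since $\esp|\tilde\xi_j|^2=2$. Gaussian concentration then gives
\begin{equation*}
\prob\big(\sqrt V>\sqrt2\,\|\bu\|_2+y\|\bu\|_\infty\big)\le e^{-y^2/2},
\end{equation*}
so that on the complementary event $\sqrt V\le\sqrt2\,\|\bu\|_2+y\|\bu\|_\infty\le\sqrt2\big(\|\bu\|_2+y\|\bu\|_\infty\big)$.

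Finally I would assemble the pieces. On the good event $G=\{\sqrt V\le\sqrt2(\|\bu\|_2+y\|\bu\|_\infty)\}$, which depends only on $\btildexi$, the target threshold dominates $x\sqrt V$, so the conditional maximal inequality controls $\prob\big(\sup_t|S(t)|>\sqrt2\,x(\|\bu\|_2+y\|\bu\|_\infty),\,G\mid\btildexi\big)$ by $(N+1)e^{-x^2/2}$; integrating over $G$ and adding $\prob(G^c)\le e^{-y^2/2}$ gives the stated bound. The hard part is the conditional maximal inequality: the stationarity computation makes both the pointwise Gaussian tail and the crossing rate clean, but pinning the exact prefactor $N+1$ (rather than a generic multiple of $N$) requires care in the crossing count, and rigorously justifying Rice's formula for this trigonometric polynomial is the one genuinely technical point; everything else is routine Gaussian concentration.
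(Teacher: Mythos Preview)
The paper does not actually prove this lemma; it simply cites Lemma~3 of \citet*{CollierDalalyan2012}. Your strategy---condition on $\btildexi$ so that $S$ becomes a stationary centered Gaussian trigonometric polynomial of degree $N$ with constant variance $V=\sum_j u_j^2|\tilde\xi_j|^2$, bound $\prob(\|S\|_\infty>x\sqrt V\mid\btildexi)$ by a Rice/Berman level-crossing count, and then control $\sqrt V$ via Gaussian Lipschitz concentration (Lipschitz constant $\|\bu\|_\infty$, $\esp\sqrt V\le\sqrt{2}\,\|\bu\|_2$)---is correct and is exactly the machinery the present paper already deploys in Lemma~\ref{lemma:5} through Berman's formula (Theorem~\ref{theorem:Berman}); indeed, your computation $-\Gamma''(0)/\Gamma(0)=\big(\sum_j j^2u_j^2|\tilde\xi_j|^2\big)/V\le N^2$ is the same derivative calculation carried out there. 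It is therefore very likely that the cited proof follows the same route. The one point you flag---whether the crossing argument yields the prefactor $N+1$ on the nose rather than a constant multiple of $N$---is real but immaterial for every application in the paper, where only the order $N e^{-x^2/2}$ is used.
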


\begin{proof}[Proof of Lemma~\ref{lemma:4}]
We refer to \citet*{CollierDalalyan2012}, Lemma 3, for a proof of this lemma.
\end{proof}

\begin{lemma}\label{lemma:5}
Let $\bc=(c_1,c_2,\ldots)$ and
$\tilde{\bc}=(\tilde{c}_1,\tilde{c}_2,\ldots)$ in $\FsL$ with $s>0$ and
let $N$ be an integer. Denoting $\eta_j, \tilde{\eta}_j \simiid
\nzeroun$, we define $$S(t) = \sum_{j=1}^N \frac{\eta_j \Re(c_j-e^{-\ii
jt}\tilde{c}_j) + \tilde{\eta}_j \Im(c_j-e^{-\ii
jt}\tilde{c}_j)}{\sqrt{\sum_{j=1}^N \big|c_j-e^{-\ii
jt}\tilde{c}_j\big|^2}}$$ for every $t$ in $[0,2\pi]$. Then
\begin{equation*}
\prob\bigg( \|S\|_\infty \geq x\bigg) \leq \Big( \frac{L\cdot\max\{1,N^{1-s}\}}{\sqrt{\min_\tau \sum_{j=1}^N |c_j-e^{-\ii j\tau}\tilde{c}_j|^2}} +1 \Big) e^{-\frac{x^2}{2}}.
\end{equation*}
\end{lemma}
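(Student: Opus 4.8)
The plan is to recognize $S$ as a smooth, centered Gaussian process on the circle $[0,2\pi]$ whose one-dimensional marginals are all standard normal, and then to control its maximum by a level-crossing (Kac--Rice) argument in which the oscillation of the process is measured by a uniform derivative bound coming from the Sobolev constraint. First I would set up the geometric picture. Writing $w_j(t)=c_j-e^{-\ii jt}\tilde c_j$ and collecting the real and imaginary parts of $(w_1(t),\dots,w_N(t))$ into a vector $v(t)$, the square $\|v(t)\|_2^2=\sum_{j=1}^N|w_j(t)|^2$ is exactly the squared denominator of $S$. Stacking $(\eta_j,\tilde\eta_j)_{j\le N}$ into a standard Gaussian vector $g$, one checks that the numerator of $S(t)$ equals $\langle g,v(t)\rangle$, whence
\begin{equation*}
S(t)=\big\langle g,\,u(t)\big\rangle,\qquad u(t)=v(t)/\|v(t)\|_2 .
\end{equation*}
Thus $S$ is the image of a fixed standard Gaussian vector under a curve $t\mapsto u(t)$ on the unit sphere, so $S(t)\sim\calN(0,1)$ for every $t$. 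If $\min_\tau\sum_{j=1}^N|c_j-e^{-\ii j\tau}\tilde c_j|^2=0$ the asserted bound is vacuous, so I may assume this quantity is positive; then $\|v(t)\|_2$ is bounded away from $0$ and $u$ is a $C^\infty$, hence $C^1$, loop.

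Second, I would bound the speed of this loop using the Sobolev balls. Differentiating, $\tfrac{d}{dt}w_j(t)=\ii j\,e^{-\ii jt}\tilde c_j$, so $\|v'(t)\|_2^2=\sum_{j=1}^N j^2|\tilde c_j|^2$; splitting off the factor $j^{2s}$ gives $\sum_{j=1}^N j^2|\tilde c_j|^2\le\max\{1,N^{2(1-s)}\}\sum_{j\ge1}j^{2s}|\tilde c_j|^2\le\big(L\max\{1,N^{1-s}\}\big)^2$. Since $u$ has unit norm, $u'(t)=\|v(t)\|_2^{-1}$ times the projection of $v'(t)$ onto $u(t)^\perp$, so
\begin{equation*}
\|u'(t)\|_2\le\frac{\|v'(t)\|_2}{\|v(t)\|_2}\le\frac{L\,\max\{1,N^{1-s}\}}{\sqrt{\min_\tau\sum_{j=1}^N|c_j-e^{-\ii j\tau}\tilde c_j|^2}}=:K,
\end{equation*}
uniformly in $t$. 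This is precisely the factor appearing in the statement.

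Third, I would convert the speed bound into a tail bound for $\|S\|_\infty=\sup_t|S(t)|$. Because $\var S(t)\equiv 1$, the variables $S(t)$ and $S'(t)=\langle g,u'(t)\rangle$ are uncorrelated, hence independent, with $S'(t)\sim\calN(0,\|u'(t)\|_2^2)$. On the circle, $\{\|S\|_\infty\ge x\}$ is contained in $\{|S(0)|\ge x\}$ together with the event that $S$ crosses the levels $\pm x$; I would bound the probability of the latter by the expected number of such crossings, which by the Kac--Rice formula for a unit-variance process is a constant multiple of $\tfrac{e^{-x^2/2}}{2\pi}\int_0^{2\pi}\|u'(t)\|_2\,dt$, hence of order $K\,e^{-x^2/2}$, while $\prob(|S(0)|\ge x)\le e^{-x^2/2}$ supplies the additive $+1$. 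Collecting the two pieces yields a bound of the claimed form $(K+1)\,e^{-x^2/2}$.

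The two genuinely delicate points are, first, the regularity and measurability bookkeeping needed to make the crossing step rigorous --- justifying that the sample paths are $C^1$, that $\sup_t$ may be replaced by a supremum over a countable dense set, and that the crossing counts have finite expectation --- and, second, the careful accounting of the absolute constants, since a naive union over the up-crossings of $+x$ and the down-crossings of $-x$ produces an extra numerical factor that must be tightened (or absorbed into the downstream constants) to land on the stated coefficient $K$. The conceptual content, however, lies entirely in the first two steps: the normalization that forces unit marginal variance, and the Sobolev bound on $\|u'\|$.
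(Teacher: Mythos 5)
Your proposal is correct and follows essentially the same route as the paper: the paper likewise normalizes the coefficient functions, bounds $\sum_{j=1}^N\big(f_j'(t)^2+g_j'(t)^2\big)$ by the identical Sobolev computation $\sum_{j\le N}j^2|\tilde c_j|^2\le L^2\max\{1,N^{2-2s}\}$, and then invokes Berman's (1988) level-crossing formula --- which is exactly the Kac--Rice bound you propose to re-derive --- to obtain the $(K+1)e^{-x^2/2}$ tail. The only differences are cosmetic: the paper cites Berman's formula as a black box rather than reproving it, and it is just as silent as you are careful about the one-sided versus two-sided constant you flag at the end.
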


First recall Berman's formula, that we will need in the proof.

\begin{theorem}[\citet*{Berman1988}]\label{theorem:Berman}
Let $N$ be a positive integer, $a<b$ some real numbers and $g_j$,
$j=1,\ldots,N$ be continuously differentiable functions on $[a,b]$
satisfying $\sum_{j=1}^N g_j(t)^2 = 1$ for all $t\in\RR$ and $\eta_j$,
$j=1,\ldots,N$, some independent standard Gaussian variables. Then
\begin{equation*}
\prob\bigg( \sup_{[a,b]} \sum_{j=1}^N g_j(t) \eta_j \geq x\bigg) \leq \frac{I}{2\pi}e^{-\frac{x^2}{2}} + \int_x^\infty \frac{e^{-\frac{t^2}{2}}}{\sqrt{2\pi}}\,dt
\end{equation*}
with
\begin{equation*}
I = \int_a^b \bigg[{\sum_{j=1}^N g_j'(t)^2}\bigg]^{1/2} \,dt.
\end{equation*}
\end{theorem}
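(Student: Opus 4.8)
The plan is to view $X(t) = \sum_{j=1}^N g_j(t)\eta_j$ as a Gaussian process on $[a,b]$ and to bound the tail of its supremum by splitting off the value at the left endpoint and controlling the expected number of upcrossings of the level $x$, in the spirit of the Rice/Kac--Rice method. Because $g_j\in C^1([a,b])$ and the $\eta_j$ are finitely many Gaussian scalars, every sample path of $X$ is itself $C^1$, so no pathwise regularity issue arises. The normalization $\sum_{j=1}^N g_j(t)^2 = 1$ forces $X(t)\sim\nzeroun$ for each fixed $t$; in particular $\prob(X(a)\geq x) = \int_x^\infty \frac{e^{-t^2/2}}{\sqrt{2\pi}}\,dt$, which is exactly the second term in the announced bound.

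First I would record the elementary inclusion that follows from path continuity: if $\sup_{[a,b]} X \geq x$ while $X(a) < x$, then the first time the path reaches the closed level set $\{t : X(t) \geq x\}$ is interior and (outside a negligible tangential event) produces an upcrossing. Writing $U_x$ for the number of upcrossings of level $x$ by $X$ on $[a,b]$, this gives
\begin{equation*}
\{\sup_{[a,b]} X \geq x\} \subseteq \{X(a) \geq x\} \cup \{U_x \geq 1\},
\end{equation*}
and Markov's inequality yields $\prob(\sup_{[a,b]} X \geq x) \leq \prob(X(a)\geq x) + \esp[U_x]$. It then suffices to show $\esp[U_x] \leq \frac{I}{2\pi}e^{-x^2/2}$.

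The computation of $\esp[U_x]$ rests on a single covariance identity. Differentiating $\sum_j g_j(t)^2 \equiv 1$ gives $\sum_j g_j(t)g_j'(t)\equiv 0$, so $\esp[X(t)X'(t)] = 0$; being jointly Gaussian, $X(t)$ and $X'(t) = \sum_j g_j'(t)\eta_j$ are therefore independent at each fixed $t$. I would then invoke the Kac counting formula for upcrossings,
\begin{equation*}
U_x = \lim_{\e\to 0} \frac{1}{2\e}\int_a^b \fcar_{\{|X(t)-x|<\e\}}\,X'(t)^+\,dt,
\end{equation*}
take expectations, and exchange the limit, the integral and the expectation (justified by monotone/dominated convergence and Fubini, using the $C^1$ paths). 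By the independence just established the integrand factors as $\prob(|X(t)-x|<\e)\,\esp[X'(t)^+]$; letting $\e\to 0$ turns $\frac{1}{2\e}\prob(|X(t)-x|<\e)$ into the $\nzeroun$ density $\frac{1}{\sqrt{2\pi}}e^{-x^2/2}$, while $\esp[X'(t)^+] = \frac{1}{\sqrt{2\pi}}\big(\sum_j g_j'(t)^2\big)^{1/2}$ because $X'(t)$ is centred Gaussian of variance $\sum_j g_j'(t)^2$. Integrating in $t$ reproduces $\esp[U_x] = \frac{1}{2\pi}e^{-x^2/2}\int_a^b\big(\sum_j g_j'(t)^2\big)^{1/2}\,dt = \frac{I}{2\pi}e^{-x^2/2}$, and adding the endpoint term gives the stated inequality.

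The step I expect to be the main obstacle is the rigorous passage through the Kac formula and the interchange of limit and expectation: one must rule out \emph{tangential} crossings (points where simultaneously $X(t)=x$ and $X'(t)=0$), which would invalidate the upcrossing count, and show that for Lebesgue-almost every realization these do not occur on $[a,b]$. This nondegeneracy holds generically from the Gaussian structure, and the uniform integrability needed for the $\e\to0$ exchange follows from the boundedness of the Gaussian densities together with $\int_a^b|X'(t)|\,dt\in L^1$. Away from these measure-theoretic verifications the computation is exact and delivers precisely Berman's constant $\frac{I}{2\pi}$.
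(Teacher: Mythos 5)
The paper itself contains no proof of this statement: it is recalled verbatim from \citet*{Berman1988}, the citation standing in for the proof, so there is no internal argument to compare yours against. Your Rice--Kac upcrossing argument is correct and is in fact the classical derivation of precisely this bound for a unit-variance $C^1$ Gaussian process: the inclusion $\{\sup_{[a,b]}X\geq x\}\subseteq\{X(a)\geq x\}\cup\{U_x\geq1\}$ together with Markov's inequality, the orthogonality $\esp[X(t)X'(t)]=\frac{1}{2}\frac{d}{dt}\sum_j g_j(t)^2=0$ (hence independence of $X(t)$ and $X'(t)$, being jointly Gaussian), and the evaluation $\esp[X'(t)^+]=(2\pi)^{-1/2}\big(\sum_j g_j'(t)^2\big)^{1/2}$ yield $\esp[U_x]\leq\frac{I}{2\pi}e^{-x^2/2}$, which is exactly Berman's constant. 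Two remarks to close the measure-theoretic step you flagged. First, the almost-sure absence of tangencies (no $t$ with $X(t)=x$ and $X'(t)=0$) follows from Bulinskaya's lemma, whose hypotheses hold here with no nondegeneracy assumption on $X'$: the paths are $C^1$ and the one-dimensional density of $X(t)$ is the standard normal density, uniformly bounded; this also disposes of points where $\sum_j g_j'(t)^2=0$, and of the boundary cases since $\prob(X(a)=x)=\prob(X(b)=x)=0$. Second, since only an upper bound is needed, you can avoid justifying the full interchange in the Kac formula: on the tangency-free event the counting approximation $\frac{1}{2\e}\int_a^b \fcar_{\{|X(t)-x|<\e\}}X'(t)^+\,dt$ converges to $U_x$, so Fatou's lemma plus the exact computation of its expectation (using the pointwise factorization by independence and the uniform bound $\frac{1}{2\e}\prob(|X(t)-x|<\e)\leq(2\pi)^{-1/2}$) gives $\esp[U_x]\leq\frac{I}{2\pi}e^{-x^2/2}$ directly, without proving Rice's formula as an identity. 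With these standard patches your argument is complete and coincides with the proof underlying the cited result.
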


\begin{proof}[Proof of Lemma~\ref{lemma:5}]
Denote
\begin{equation*}
\begin{cases}
\ f_j(t) = \frac{\Re(c_j-e^{-\ii j t}\tilde{c}_j)}{\sqrt{\sum_{k=1}^{N} |c_k-e^{-\ii kt}\tilde{c}_k|^2}},
\\[6pt]
\ g_j(t) = \frac{\Im(c_j-e^{-\ii j t}\tilde{c}_j)}{\sqrt{\sum_{k=1}^{N} |c_k-e^{-\ii kt}\tilde{c}_k|^2}}.
\end{cases}
\end{equation*}
We compute the derivatives of these functions:
\begin{align*}
f_j'(t) &= \frac{-\Im( je^{-\ii j t}\tilde{c}_j)}{\sqrt{\sum_{k=1}^{N} |c_k-e^{-\ii kt}\tilde{c}_k|^2}} \\
&+ \frac{\Re(c_j-e^{-\ii j t}\tilde{c}_j)}{\big(\sum_{k=1}^{N} |c_k-e^{-\ii kt}\tilde{c}_k|^2\big)^\frac{3}{2}} \sum_{k=1}^{N} \Im(k\overline{c}_k\tilde{c}_ke^{-\ii kt}) \\
\text{and } g_j'(t) &= \frac{\Re( je^{-\ii j t}\tilde{c}_j)}{\sqrt{\sum_{k=1}^{N}
|c_k-e^{-\ii kt}\tilde{c}_k|^2}} \\
&+ \frac{\Im (c_j-e^{-\ii j t}\tilde{c}_j)}{\big(\sum_{k=1}^{N} |c_k-e^{-\ii kt}\tilde{c}_k|^2\big)^\frac{3}{2}} \sum_{k=1}^{N} \Im(k\overline{c}_k\tilde{c}_ke^{-\ii kt}),
\end{align*}
whence
\begin{align*}
\sum_{j=1}^{N} \big(f_j'(t)^2 + g_j'(t)^2\big) &= \frac{\sum_{j=1}^{N} j^2 |\tilde{c}_j|^2}{\sum_{k=1}^{N} |c_k-e^{-\ii kt}\tilde{c}_k|^2} - \bigg( \frac{\sum_{k=1}^{N} \Im(k\overline{c}_k\tilde{c}_ke^{-\ii kt})}{\sum_{k=1}^{N} |c_k-e^{-\ii kt}\tilde{c}_k|^2} \bigg)^2 \\
&\leq \frac{L^2 \max\{1,N^{2-2s}\} }{\min_{t} \sum_{k=1}^{N} |c_k-e^{-\ii kt}\tilde{c}_k|^2}
\end{align*}
The conclusion follows from Berman's formula.
\end{proof}

\begin{lemma}\label{lemma:6}
Let $\s$ be a positive real number and $s,S$ in
$[s_1,s_2]\subseteq\RR^+_*$ be such that $0\leq s-S \leq
\frac{1}{\log\s^{-1}}$. Denote $\rhosigma^*(s)=\big(\s^2
\sqrt{\log\s^{-1}}\big)^\frac{2s}{4s+1}$, then, for $\s$ small enough,
\begin{equation*}
\frac{\rhosigma^*(S)}{\rhosigma^*(s)} \leq e^\frac{4}{(4s_1+1)^2}.
\end{equation*}
\end{lemma}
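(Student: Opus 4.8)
The plan is to express both $\rhosigma^*(S)$ and $\rhosigma^*(s)$ as powers of the single quantity $u=\s^2\sqrt{\log\s^{-1}}$ and then to control the resulting exponent. Setting $\phi(t)=\frac{2t}{4t+1}$, we have $\rhosigma^*(t)=u^{\phi(t)}$, so the ratio in question equals $u^{\phi(S)-\phi(s)}$ and its logarithm is $\big(\phi(S)-\phi(s)\big)\log u$. The whole argument then reduces to bounding this product from above, and the target bound $\frac{4}{(4s_1+1)^2}$ strongly suggests that the two factors should each contribute a power of $\log\s^{-1}$ that cancels.

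For the first factor, I would use that $\phi$ is increasing, with $\phi'(t)=\frac{2}{(4t+1)^2}\le\frac{2}{(4s_1+1)^2}$ on $[s_1,s_2]$. Since $S\le s$, the mean value theorem gives $0\le\phi(s)-\phi(S)\le\frac{2}{(4s_1+1)^2}(s-S)$, and the hypothesis $s-S\le\frac{1}{\log\s^{-1}}$ then yields $0\le\phi(s)-\phi(S)\le\frac{2}{(4s_1+1)^2\log\s^{-1}}$. For the second factor, I would write $\log u=-2\log\s^{-1}+\tfrac12\log\log\s^{-1}$; for $\s$ small enough this is negative, and $|\log u|\le 2\log\s^{-1}$ because $\log\log\s^{-1}\ge0$ once $\log\s^{-1}\ge1$.

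Finally, since $\phi(S)-\phi(s)\le0$ and $\log u\le0$, the product is nonnegative and equals $\big(\phi(s)-\phi(S)\big)\,|\log u|$, which by the two estimates above is at most $\frac{2}{(4s_1+1)^2\log\s^{-1}}\cdot 2\log\s^{-1}=\frac{4}{(4s_1+1)^2}$. Exponentiating gives the claim. There is no genuine obstacle here; the only points requiring care are the sign bookkeeping (both factors are negative, so their product is exactly the positive quantity to be bounded) and checking that $\s$ is small enough for $u<1$ and for the crude estimate $|\log u|\le 2\log\s^{-1}$ to hold.
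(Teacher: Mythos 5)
Your proof is correct and follows essentially the same route as the paper: both write the ratio as $u^{\phi(S)-\phi(s)}$ with $u=\s^2\sqrt{\log\s^{-1}}$, bound the exponent difference by $\frac{2(s-S)}{(4s_1+1)^2}$ (the paper by computing it exactly as $\frac{2(S-s)}{(4s+1)(4S+1)}$, you via the mean value theorem), and then use $|\log u|\le 2\log\s^{-1}$ for $\s$ small enough. The sign bookkeeping and the smallness conditions you flag are exactly the points the paper also relies on.
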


\begin{proof}[Proof of Lemma~\ref{lemma:6}]
By the definition of $\rhosigma^*(s)$, we have
\begin{equation*}
\frac{\rhosigma^*(S)}{\rhosigma^*(s)} = \Big(\s^2 \sqrt{\log(\s^{-1})}\Big)^\frac{2(S-s)}{(4s+1)(4S+1)},
\end{equation*}
which, when $\s$ is so small that $\s^2\sqrt{\log\s^{-1}}\le1$, leads, with the hypothesis on $s$ and $S$,
\begin{equation*}
\frac{\rhosigma^*(S)}{\rhosigma^*(s)} \le \Big(\s^2 \sqrt{\log(\s^{-1})}\Big)^\frac{-2}{(4s_1+1)^2\log\s^{-1}}.
\end{equation*}
Then, we compute
\begin{align*}
&\Big(\s^2 \sqrt{\log(\s^{-1})}\Big)^\frac{-2}{(4s_1+1)^2\log\s^{-1}} \\
&= \exp\Big\{ \frac{-2}{(4s_1+1)^2\log\s^{-1}} ( 2\log\s + \frac{1}{2}\log\log\s^{-1} ) \Big\} \\
&= \exp\Big\{ \frac{4}{(4s_1+1)^2} (1 - \frac{\log\log\s^{-1}}{4\log\s^{-1}} ) \Big\} \\
&\le e^\frac{4}{(4s_1+1)^2},
\end{align*}
and this concludes the proof.
\end{proof}

Finally, we recall here Berry-Esseen's inequality, in a simpler version
than Theorem 5.4 of \citet*{Petrov1995}.
\begin{theorem}[Berry-Esseen's inequality]\label{theorem:BE}
Let $N$ be a positive integer and some random variables
$X_1,\ldots,X_N\simiid X$ and such that $\esp(X) = 0$, $\var(X) =
\g^2$, $\esp|X|^3 = m^3 < \pinf.$ Denote $F_N(x) =
\prob\big(\frac{1}{\sqrt{N}\g}\sum_{j=1}^N X_j <x\big)$ and $\Phi$ the
distribution function of the standard Gaussian variable. Then
\begin{equation*}
\sup_x |F_N(x) - \Phi(x)| \leq \frac{A m^3}{\g^3} \frac{1}{\sqrt{N}}\,,
\end{equation*}
for an absolute constant number $A$. Moreover, in the case when
$X=Y^2-1$ and $Y$ has a centered Gaussian distribution, and using the
majoration $A\le \frac{1}{2}$,
\begin{equation*}
\sup_x |F_N(x) - \Phi(x)| \leq \frac{1}{\sqrt{2\pi N}}\,.
\end{equation*}
\end{theorem}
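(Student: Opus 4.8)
The plan is to treat the two assertions of the statement separately. The first, general inequality is nothing more than a specialization of the classical uniform Berry--Esseen bound, and the author already advertises it as ``a simpler version than Theorem~5.4 of \citet*{Petrov1995}''; accordingly I would simply invoke that reference, after checking that the moment hypotheses ($\esp(X)=0$, $\var(X)=\g^2$ and $\esp|X|^3=m^3<\pinf$) match exactly the assumptions in Petrov's statement and that the absolute constant $A$ can be taken uniform over all such $X$. No independent argument is needed here beyond this verification, since the i.i.d.\ assumption and the normalization by $\sqrt N\g$ are identical.

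All the genuine work lies in the ``Moreover'' clause, the case $X=Y^2-1$ with $Y$ a centered Gaussian variable. Because the ratio $m^3/\g^3$ entering the bound is invariant under rescaling of $Y$, I would first reduce to $Y\simiid\nzeroun$. Then I would verify $\esp X=\esp(Y^2)-1=0$ and compute the variance $\g^2=\var(Y^2-1)=\esp(Y^4)-1=3-1=2$, so that $\g^3=2\sqrt2$. The only remaining quantity is the third absolute moment $m^3=\esp|Y^2-1|^3$, which I would evaluate by splitting the expectation at $|Y|=1$, writing $|Y^2-1|^3=(1-Y^2)^3$ on $\{|Y|<1\}$ and $(Y^2-1)^3$ on $\{|Y|\ge1\}$, expanding each cube, and reducing everything to truncated Gaussian moments $\esp(Y^{2k}\fcar_{\{|Y|<1\}})$ that are expressible through the error function and the density value $\phi(1)$.

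With $\g^2=2$ and a control on $m^3$ in hand, the final step is arithmetic: one substitutes into $\frac{A m^3}{\g^3}\frac{1}{\sqrt N}$ and uses the improved bound $A\le\frac12$ to collapse the prefactor to the announced $\frac{1}{\sqrt{2\pi}}$, giving $\sup_x|F_N(x)-\Phi(x)|\le\frac{1}{\sqrt{2\pi N}}$. I expect the sharp control of $m^3=\esp|Y^2-1|^3$ to be the main obstacle: the absolute value forces the domain split and careful bookkeeping of the resulting error-function terms, and it is precisely the size of this moment — measured against $A\le\frac12$ — that has to come out right in order to recover the clean constant. Should the crude moment route prove too lossy for that purpose, an alternative is to bypass $\esp|Y^2-1|^3$ altogether and apply Esseen's smoothing inequality directly to the characteristic function of the centered chi-square summand, which is explicit and would yield a self-contained derivation of the stated bound.
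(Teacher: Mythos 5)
Your handling of the first inequality matches what the paper does: Theorem~\ref{theorem:BE} is stated there without proof, as a recalled classical result with a pointer to Theorem~5.4 of \citet*{Petrov1995}, so invoking that reference (after checking the hypotheses) is all that is expected. The genuine gap is in the ``Moreover'' clause, where your main route fails at exactly the point you flagged as the main obstacle --- and it fails irreparably, not merely for lack of careful bookkeeping. Take $Y\simiid\nzeroun$ (your reduction by scale invariance is fine) and $X=Y^2-1$. Then $\g^2=\var(X)=2$, so $\g^3=2\sqrt2$, and since $|X|^3\geq X^3$ one has
\begin{equation*}
m^3=\esp|Y^2-1|^3\;\geq\;\esp\big[(Y^2-1)^3\big]=\esp(Y^6)-3\esp(Y^4)+3\esp(Y^2)-1=15-9+3-1=8,
\end{equation*}
the exact value being $\approx 8.69$ after the domain split you describe. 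Hence
\begin{equation*}
\frac{A\,m^3}{\g^3}\;\geq\;\frac12\cdot\frac{8}{2\sqrt2}=\sqrt2\approx1.41,
\end{equation*}
whereas the announced prefactor is $1/\sqrt{2\pi}\approx0.40$. For the substitution route to collapse to $1/\sqrt{2\pi}$ you would need $m^3\leq 4/\sqrt{\pi}\approx2.26$, which is impossible; no evaluation of truncated Gaussian moments can get below the trivial bound $m^3\geq\esp(X^3)=8$.

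Consequently the second display cannot be obtained by plugging moments into the first display with $A\leq\frac12$, and your primary plan, if executed, would refute rather than confirm the stated constant. The inequality itself is plausibly true: the first-order Edgeworth correction has third cumulant $\esp(X^3)/\g^3=2\sqrt2$ and leading term $\frac{2\sqrt2}{6}\sup_x|(x^2-1)|\phi(x)\,N^{-1/2}=\frac{2\sqrt2}{6}\cdot\frac{1}{\sqrt{2\pi N}}\approx\frac{0.47}{\sqrt{2\pi N}}$, comfortably below $\frac{1}{\sqrt{2\pi N}}$ asymptotically, and a direct check at $N=1$ also passes. But establishing it uniformly in $N$ requires a genuinely different argument --- your fallback of applying Esseen's smoothing inequality to the explicit characteristic function of the centered $\chi^2$ summand, or a refined Berry--Esseen bound specialized to such sums --- and that fallback, which is the entire proof burden here, is left unexecuted in your proposal. (In fairness, the paper offers no justification for this constant either; the point is that the route suggested by the phrase ``using the majoration $A\leq\frac12$'' is arithmetically closed off.)
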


\end{document}